\begin{document}

\newtheorem{theorem}{Theorem}
\newtheorem{lemma}{Lemma}
\newtheorem{corollaire}{Corollaire}
\newtheorem{remarque}{Remarque}

\newtheorem{proposition}{Proposition}
\newtheorem{corollary}{Corollary}

\theoremstyle{definition}
\newtheorem*{definition}{Definition}
\newtheorem*{remark}{\bf Remark}
\newtheorem*{example}{Example}

\def\cA{\mathcal A}
\def\cB{\mathcal B}
\def\cC{\mathcal C}
\def\cD{\mathcal D}
\def\cE{\mathcal E}
\def\cF{\mathcal F}
\def\cG{\mathcal G}
\def\cH{\mathcal H}
\def\cI{\mathcal I}
\def\cJ{\mathcal J}
\def\cK{\mathcal K}
\def\cL{\mathcal L}
\def\cM{\mathcal M}
\def\cN{\mathcal N}
\def\cO{\mathcal O}
\def\cP{\mathcal P}
\def\cQ{\mathcal Q}
\def\cR{\mathcal R}
\def\cS{\mathcal S}
\def\cT{\mathcal T}
\def\cNnot{\cN^\circ}
\def\cV{\mathcal V}
\def\cW{\mathcal W}
\def\cX{\mathcal X}
\def\cY{\mathcal Y}
\def\cZ{\mathcal Z}

\def\Z{{\mathbb Z}}
\def\P{{\mathbb P}}
\def\R{{\mathbb R}}
\def\F{{\mathbb F}}
\def\N{{\mathbb N}}
\def\C{{\mathbb C}}
\def\Q{{\mathbb Q}}
\def\sC{{\mathscr C}}
\def\sD{{\mathscr D}}
\def\sE{{\mathscr E}}
\def\oop{\overline{\overline p}}
\def\ooq{\overline{\overline q}}

\def\Fp{\F_p}
\def\e{\mathbf{e}}
\def\ep{\mathbf{e}_p}
\def\eps{{\varepsilon}}
\def\mand{\qquad \mbox{and} \qquad}
\def\mor{\qquad \mbox{or} \qquad}
\def\scr{\scriptstyle}
\def\\{\cr}
\def\({\left(}
\def\){\right)}
\def\[{\left[}
\def\]{\right]}
\def\<{\langle}
\def\>{\rangle}
\def\fl#1{\left\lfloor#1\right\rfloor}
\def\rf#1{\left\lceil#1\right\rceil}
\def\le{\leqslant}
\def\ge{\geqslant}
\def\pk{\widehat p_k}
\def\qk{\widehat q_k}

\def\xxx{\vskip5pt\hrule\vskip5pt}

\renewcommand{\limsup}{\mathop{\overline{\rm lim}}}
\renewcommand{\liminf}{\mathop{\underline{\rm lim}}}
\newcommand{\ud}{\mathop{\overline{\delta}}}
\newcommand{\ld}{\mathop{\underline{\delta}}}

\newcommand{\comm}[1]{\marginpar{%
\vskip-\baselineskip 
\raggedright\footnotesize
\itshape\hrule\smallskip#1\par\smallskip\hrule}}

\title{\sc The Nicolas and Robin inequalities\break with sums of two squares}

\author{
{\sc William D.~Banks} \\
{Department of Mathematics} \\
{University of Missouri} \\
{Columbia, MO 65211 USA} \\
{\tt bbanks@math.missouri.edu} \\
\and
{\sc Derrick N.~Hart} \\
{Department of Mathematics} \\
{University of Missouri} \\
{Columbia, MO 65211 USA} \\
{\tt hart@math.missouri.edu} \\
\and
{\sc Pieter Moree\footnote{Corresponding author}} \\
{Max-Planck-Institut f\"ur Mathematik} \\
{Vivatsgasse 7} \\
{D-53111 Bonn, Germany} \\
{\tt moree@mpim-bonn.mpg.de} \\
\and
{\sc C.~Wesley Nevans} \\
{Department of Mathematics} \\
{University of Missouri} \\
{Columbia, MO 65211 USA} \\
{\tt nevans@math.missouri.edu}}

\pagenumbering{arabic}

\date{}

\maketitle

\begin{abstract}
\noindent In 1984, G.~Robin proved that the Riemann hypothesis is
true if and only if the \emph{Robin inequality}
$\sigma(n)<e^\gamma n\log\log n$ holds for every integer $n>5040$,
where $\sigma(n)$ is the sum of divisors function, and $\gamma$ is
the Euler-Mascheroni constant. We exhibit a broad class of subsets
$\cS$ of the natural numbers such that the Robin inequality holds
for all but finitely many $n\in\cS$. As a special case, we
determine the finitely many numbers of the form $n=a^2+b^2$ that
do not satisfy the Robin inequality. In fact, we prove our
assertions with the \emph{Nicolas inequality}
$n/\varphi(n)<e^{\gamma}\log \log n$; since
$\sigma(n)/n<n/\varphi(n)$ for $n>1$ our results for the Robin
inequality follow at once.
\end{abstract}

\section{Introduction}

Let $\varphi(n)$ denote the \emph{Euler function}. In 1903 Landau
(see~\cite[pp. 217--219]{Land}) showed that
\begin{equation}
\label{eq:Landau-lim}
\limsup_{n\to\infty}\,\frac{n}{\varphi(n)\log\log n}=e^\gamma,
\end{equation}
where $\gamma$ is the \emph{Euler-Macheroni constant}. Eighty
years later, in a highly interesting work, Nicolas~\cite{Nic}
proved that the inequality
$$
\frac{n}{\varphi(n)}>e^\gamma\log\log n
$$
holds for infinitely many natural numbers $n$. Moreover, if $N_k$
denotes the product of the first $k$ primes, he proved that
$$
\frac{N_k}{\varphi(N_k)}>e^\gamma\log\log N_k
$$
holds for every $k\ge 1$ on the \emph{Riemann hypothesis}~(RH).
Assuming RH is false, he also showed there are both infinitely
many $k$ for which this inequality holds and infinitely many $k$
for which it does not hold. To acknowledge the many contributions
of Nicolas to this subject, we denote by $\cN$ the set of numbers
$n\in\N$ that satisfy the \emph{Nicolas inequality}:
\begin{equation}
\label{eq:Nicolas-ineq} \frac{n}{\varphi(n)}<e^\gamma\log\log n.
\end{equation}
The principle aim of this paper is to exhibit a broad class of
infinite subsets $\cS\subset\N$ such that this inequality holds
for all but finitely many $n\in\cS$. This class includes a set
that contains all natural numbers which can be expressed as a sum
of two squares.

Let $\sigma(n)$ be the \emph{sum of divisors function}. The
analogue of~\eqref{eq:Landau-lim} for this function was obtained
by Gronwall~\cite{Gron}, who proved that
$$
\limsup_{n\to\infty}\,\frac{\sigma(n)}{n\log\log n}=e^\gamma.
$$
Robin~\cite{Robin} showed that if RH is true, then the \emph{Robin
inequality}:
\begin{equation}
\label{eq:Robin-ineq} \frac{\sigma(n)}{n}<e^\gamma\log\log n
\end{equation}
holds for every integer $n>5040$, whereas if RH is false, then
this inequality fails for infinitely many $n$. We denote by $\cR$
the set of numbers $n\in\N$ that satisfy~\eqref{eq:Robin-ineq}. In
view of the elementary inequality
$$
\frac{\sigma(n)}{n}<\frac{n}{\varphi(n)}\qquad(n>1),
$$
it is clear that $\cN\subset\cR$.  Thus, for the class of subsets
$\cS\subset\N$ considered in the present paper, the Robin
inequality holds for all but finitely many $n\in\cS$.

Our work was originally inspired by a recent paper of Choie
\emph{et al}~\cite{CLMS}, which establishes the inclusion in $\cR$
of various infinite subsets of the natural numbers $\N$. In
particular, in~\cite{CLMS} it is shown that $\cR$ contains every
square-free number $n>30$, every odd integer $n>9$, every powerful
number $n>36$, and every integer $n>1$ not divisible by the fifth
power of some prime. As a consequence it follows that the RH holds
iff the Robin inequality holds for all natural numbers $n$
divisible by the fifth power of some prime. Note that this
criterion does not have the restriction $n\ge 5041$. Another
``5041-free'' criterion was given earlier by Lagarias~\cite{Lag},
who showed that RH is true iff
$$
\sigma(n)\le H_n+e^{H_n}\log H_n,
$$
where
$$
H_n=\sum_{j\le n}{1\over j}\qquad(n\ge 1).
$$

To state our results more precisely, let $\P$ denote the set of
prime numbers, and for any subset $\cA\subset\P$, put
$$
\pi_\cA(x)=\#\big\{p\le x~:~p\in\cA\big\}
$$
Let $\cP$ be an arbitrary (fixed) subset of $\P$ such that
\begin{equation}
\label{eq:deldel1}
\ud=\limsup_{x\to\infty}\frac{\pi_\cP(x)}{\pi(x)}<1 \mand
\ld=\liminf_{x\to\infty}\frac{\pi_\cP(x)}{\pi(x)}>0,
\end{equation}
where $\pi(x)=\#\{p\le x\}$ as usual. Let $\cQ$ denote the
complementary set of primes (i.e., $\cQ=\P\setminus\cP$), and note
that
\begin{equation}
\label{eq:deldel2}
\limsup_{x\to\infty}\frac{\pi_{\cQ}(x)}{\pi(x)}=1-\ld<1\mand
\liminf_{x\to\infty}\frac{\pi_{\cQ}(x)}{\pi(x)}=1-\ud>0.
\end{equation}
In this paper, we work with the set $\cS=\cS(\cP)$ defined by
\begin{equation}
\label{eq:S-defn}
\cS=\big\{n\in\N~:~\text{if~}p\in\cQ\text{~and~}p\mid
n,\text{~then~}p^{2}\mid n\big\}.
\end{equation}
Our main result is the following:

\begin{theorem}
\label{thm:main} The set $\cN$ contains all but finitely many of
the numbers in $\cS$.
\end{theorem}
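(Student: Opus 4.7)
My plan is to exploit the structure of $\cS$ via a radical decomposition and a carefully constrained Mertens-type estimate. Given $n \in \cS$, write $n = AB$ with $A = \prod_{p \mid n,\,p \in \cP} p^{v_p(n)}$ and $B = \prod_{q \mid n,\, q \in \cQ} q^{v_q(n)}$; by the definition of $\cS$, each $v_q(n) \ge 2$, so $B \ge (\mathrm{rad}\,B)^2$. Setting $u = \mathrm{rad}\,A$ and $v = \mathrm{rad}\,B$, one obtains coprime squarefree $u, v$ with prime factors in $\cP$ and $\cQ$ respectively, $uv^2 \le n$, and $n/\varphi(n) = uv/\varphi(uv)$. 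The crucial point is that the constraint $uv^2 \le n$ (rather than the weaker $uv \le n$ available without the $\cS$ structure) forces $uv$ to be significantly smaller than $n$ on a log scale whenever $v > 1$, producing the strict gap below $e^\gamma \log\log n$.

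From hypothesis~(\ref{eq:deldel1}), Abel summation yields, for any $\eps > 0$ and $x$ large,
\[
\sum_{p \le x,\, p \in \cP} \frac{1}{p} \le (\ud + \eps)\log\log x + O(1) \mand \sum_{q \le x,\, q \in \cQ} \frac{1}{q} \ge (1-\ud-\eps)\log\log x - O(1),
\]
the second following from the first via Mertens' $\sum_{p \le x}1/p = \log\log x + M + o(1)$. Letting $Y_1$ and $Y_2$ denote the largest $\cP$- and $\cQ$-prime factors of $n$ respectively (assuming without loss of generality $Y_1 \ge Y_2$), and combining the estimates above with $\log(uv/\varphi(uv)) = \sum_{p \mid uv}1/p + C_0$ (where $C_0 + M = \gamma$), one obtains
\[
\log\frac{n}{\varphi(n)} \le (\ud + \eps)\log\log Y_1 + (1 - \ud - \eps)\log\log Y_2 + \gamma + o(1).
\]

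The maximum of $uv/\varphi(uv)$ for fixed $Y_1, Y_2$ is attained when $u = \prod_{p \le Y_1,\,p \in \cP} p$ and $v = \prod_{q \le Y_2,\, q \in \cQ}q$, in which case $uv^2 \le n$ translates to $\theta_\cP(Y_1) + 2\theta_\cQ(Y_2) \le \log n$, where $\theta_\cP, \theta_\cQ$ are the restricted Chebyshev functions. Combining with $\theta(Y) \sim Y$ and $\theta_\cQ(Y) \ge (1-\ud-\eps)Y$ shows, e.g.\ in the extremal case $Y_1 = Y_2 = Y$, that $(2 - \ud - \eps)Y \le \log n$, so $\log Y \le \log\log n - \log(2-\ud) + o(1)$. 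Plugging back into the display above and exponentiating yields
\[
\frac{n}{\varphi(n)} \le e^\gamma \log\log n - e^\gamma\log(2 - \ud) + o(1),
\]
which is strictly less than $e^\gamma \log\log n$ for $n$ large, since $\ud < 1$ gives $\log(2 - \ud) > 0$.

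The main obstacle will be the optimization over general $(Y_1, Y_2)$ in the oscillating-density regime $\ld < \ud$: one must verify that the worst-case upper bounds for the two sums and the feasibility constraint cannot be simultaneously realized. The key observation enabling this is that $\pi_\cP(x) + \pi_\cQ(x) = \pi(x)$ exactly, so whenever $\cP$-primes are denser than average up to some $x$, $\cQ$-primes are correspondingly sparser; this correlation prevents double-saturation of the worst-case bounds, and the negative correction $-e^\gamma \log(2-\ud)$ (or a density-dependent positive gap replacing it) survives to yield the desired strict inequality.
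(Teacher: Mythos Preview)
Your approach is genuinely different from the paper's: you attempt a direct analytic upper bound on $n/\varphi(n)$ via Mertens-type sums and a constrained optimization in $(Y_1,Y_2)$, whereas the paper argues by contradiction on a minimal counterexample, using prime-swapping lemmas to force the largest $\cP$-prime and largest $\cQ$-prime dividing $n_k$ to satisfy $\widehat p_k\asymp\widehat q_k$, and only then derives the contradiction via a dichotomy on $n_k\lessgtr\kappa(n_k)^{1+\eps}$.

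However, what you have written is a plan, not a proof, and the gap you yourself flag is real and not closed. Two concrete problems:

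\textbf{(a) The ``WLOG $Y_1\ge Y_2$'' is not without loss of generality.} The roles of $\cP$ and $\cQ$ are asymmetric: $\cQ$-primes are forced to appear squared, so the constraint $\theta_\cP(Y_1)+2\theta_\cQ(Y_2)\le\log n$ is not symmetric in $Y_1,Y_2$. Your displayed upper bound $(\ud+\eps)\log\log Y_1+(1-\ud-\eps)\log\log Y_2+\gamma+o(1)$ is valid when $Y_1\ge Y_2$ (it comes from writing the sum as $\sum_{p\le Y_2}1/p+\sum_{Y_2<p\le Y_1,\,p\in\cP}1/p$), but when $Y_2>Y_1$ the analogous computation gives coefficients $(\ld-\eps)$ and $(1-\ld+\eps)$, and you have not treated that branch.

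\textbf{(b) The optimization is neither carried out nor is $Y_1=Y_2$ extremal.} Even in the natural-density case $\ld=\ud=\delta$, a Lagrange-multiplier check shows the maximum of $\delta\log\log Y_1+(1-\delta)\log\log Y_2$ under $\delta Y_1+2(1-\delta)Y_2\le L$ occurs near $Y_1\approx 2Y_2$, not at $Y_1=Y_2$; the resulting deficit is $e^\gamma(1-\delta)\log 2$, smaller than your claimed $e^\gamma\log(2-\ud)$. So your ``extremal case'' computation does not establish the bound. In the oscillating regime $\ld<\ud$ the situation is worse: the upper bounds on the two partial Mertens sums and the lower bounds on $\theta_\cP,\theta_\cQ$ needed for the constraint can be saturated at \emph{different} values of $Y_1$ and $Y_2$, and your final paragraph's appeal to the exact identity $\pi_\cP+\pi_\cQ=\pi$ does not control this, since that identity only correlates the densities at the \emph{same} threshold. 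The paper circumvents this entirely: its swapping Lemmas~4--6 structurally force $\widehat p_k\asymp\widehat q_k$ for any minimal violator, which is precisely the missing ingredient that would let your optimization go through.
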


\begin{corollary}
\label{cor:P-appl} Of the numbers $n$ which do not satisfy the
Nicolas inequality, all but finitely many are divisible by a prime
$q\in\cQ$ such that $q^2\nmid n$.
\end{corollary}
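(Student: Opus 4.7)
The plan is to derive the corollary immediately from Theorem \ref{thm:main} by contraposition, so the work reduces to unpacking a negation. First I would rewrite the defining condition \eqref{eq:S-defn} of $\cS$ in its negated form: a number $n\in\N$ fails to lie in $\cS$ precisely when there exists a prime $q\in\cQ$ with $q\mid n$ but $q^2\nmid n$. Hence the set of natural numbers divisible by some $q\in\cQ$ such that $q^2\nmid n$ is exactly the complement $\N\setminus\cS$.

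Next I would partition $\N\setminus\cN$ (the set of $n$ failing the Nicolas inequality) according to whether $n\in\cS$ or $n\notin\cS$. Any $n$ in $(\N\setminus\cN)\cap\cS$ lies in the finite exceptional set $\cS\setminus\cN$ furnished by Theorem \ref{thm:main}. Any remaining $n\in\N\setminus\cN$ satisfies $n\in\N\setminus\cS$, and by the first paragraph such an $n$ is divisible by some $q\in\cQ$ with $q^2\nmid n$. Combining the two cases, all but finitely many $n\notin\cN$ admit such a prime $q$, which is exactly the claim.

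Since the argument is purely logical bookkeeping built on top of Theorem \ref{thm:main}, there is no genuine obstacle to overcome. The only place requiring a moment of care is the correct negation of the quantifier structure in \eqref{eq:S-defn}: the negation of ``for every prime $p\in\cQ$ with $p\mid n$, one has $p^2\mid n$'' is ``there exists a prime $q\in\cQ$ with $q\mid n$ and $q^2\nmid n$,'' and it is this reformulation that matches the hypothesis of the corollary verbatim.
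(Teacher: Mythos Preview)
Your argument is correct and is exactly what the paper intends: the corollary is stated immediately after Theorem~\ref{thm:main} with no separate proof, since it is nothing more than the contrapositive reading of $n\notin\cS$ together with the finiteness of $\cS\setminus\cN$. Your care in negating the quantifier in~\eqref{eq:S-defn} is precisely the only thing to check.
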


In particular, for any fixed $a,m\in\N$ with $\gcd(a,m)=1$, one
can put
$$
\cP=\big\{p\in\P~:~p\not\equiv a\pmod m\big\}
$$
and apply Corollary~\ref{cor:P-appl} to deduce the following:

\begin{corollary}
\label{cor:P-appl2} Of the numbers $n$ which do not satisfy the
Nicolas inequality, all but finitely many are divisible by a prime
$q\equiv a\pmod m$ such that $q^2\nmid n$.
\end{corollary}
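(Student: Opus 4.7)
The plan is to derive Corollary~\ref{cor:P-appl2} as an immediate specialization of Corollary~\ref{cor:P-appl}: it suffices to verify that the concrete choice
$$
\cP=\big\{p\in\P : p\not\equiv a\pmod{m}\big\}
$$
satisfies the density hypothesis~\eqref{eq:deldel1}, after which the preceding corollary applies verbatim.

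First I would identify the complementary set
$$
\cQ=\P\setminus\cP=\big\{p\in\P : p\equiv a\pmod{m}\big\},
$$
which, since $\gcd(a,m)=1$, is precisely the set of primes in the arithmetic progression $a\bmod m$. By Dirichlet's theorem on primes in arithmetic progressions (equivalently, the prime number theorem for arithmetic progressions), one has
$$
\lim_{x\to\infty}\frac{\pi_\cQ(x)}{\pi(x)}=\frac{1}{\varphi(m)},
$$
so that
$$
\lim_{x\to\infty}\frac{\pi_\cP(x)}{\pi(x)}=1-\frac{1}{\varphi(m)}.
$$
Provided $\varphi(m)\ge 2$ (the substantive case), this limit lies strictly between $0$ and $1$, which gives $\ud=\ld=1-1/\varphi(m)\in(0,1)$, exactly the content of~\eqref{eq:deldel1}.

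Once this is in place, Corollary~\ref{cor:P-appl} asserts that all but finitely many integers $n\notin\cN$ are divisible by some prime $q\in\cQ$ with $q^2\nmid n$; unfolding the definition of $\cQ$ turns $q\in\cQ$ into $q\equiv a\pmod{m}$, which is the desired conclusion. There is essentially no obstacle: all the analytic work has been done in Theorem~\ref{thm:main}, and the only ingredient added here is the classical Dirichlet density for primes in an arithmetic progression in order to verify~\eqref{eq:deldel1}. The edge cases $m\in\{1,2\}$ where $\varphi(m)=1$ fall outside the hypothesis of Corollary~\ref{cor:P-appl}, but in those cases the statement is either vacuous or reduces to known inclusion results for odd or powerful numbers in $\cN$, so nothing is lost.
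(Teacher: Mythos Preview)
Your proposal is correct and follows exactly the route indicated in the paper: choose $\cP=\{p\in\P:p\not\equiv a\pmod m\}$, check the density condition~\eqref{eq:deldel1} via the prime number theorem for arithmetic progressions, and invoke Corollary~\ref{cor:P-appl}. The paper does not spell out the verification of~\eqref{eq:deldel1} or the degenerate cases $\varphi(m)=1$, so your write-up is in fact slightly more complete than the original.
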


In Section~\ref{sec:explicit} we examine more closely the special
case that
$$
\cP=\big\{p\in\P~:~p\equiv 1\pmod 4\big\}\cup\{2\}.
$$
Note that the corresponding set $\cS$ contains all natural numbers
of the form $n=a^2+b^2$ (since, by a theorem of Fermat, every
prime $q\equiv 3\pmod 4$ appears with even multiplicity in the
prime factorization of $n$ if and only if $n$ can be written as a
sum of two squares). Using effective bounds from~\cite{RamRum} on
the number of primes in arithmetic progressions modulo~$4$, we are
able to determine the set $\cS\setminus\cN$ completely, leading
to:

\begin{theorem}
\label{thm:main2} The set $\cS\setminus\cN$ contains precisely
$347$ natural numbers.  In particular, there are precisely $246$
numbers which can be expressed as a sum of two squares and such
that the Nicolas inequality~\eqref{eq:Nicolas-ineq} does not hold,
the largest of which is the number $52509581344222812810$.
\end{theorem}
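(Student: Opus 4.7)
My plan is to make Theorem~\ref{thm:main} effective in the particular case $\cP=\{p\equiv 1\pmod 4\}\cup\{2\}$, producing an explicit threshold $X$ such that every $n\in\cS$ with $n>X$ lies in $\cN$, and then enumerating the remaining candidates $n\le X$ by machine. The quantitative engine will be the effective bounds of Ramar\'e--Rumely~\cite{RamRum} on $\pi(x;4,1)$ and $\pi(x;4,3)$, from which one can derive explicit inequalities for the restricted Mertens products $\prod_{p\le y,\,p\equiv 1(4)}(1-1/p)^{-1}$ and $\prod_{p\le y,\,p\equiv 3(4)}(1-1/p)^{-1}$.

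The first step would be to revisit the proof of Theorem~\ref{thm:main} and record every asymptotic with an explicit remainder. The bound one wants has the shape $n/\varphi(n)\le F(y)\cdot G(n,y)$, where $y$ is a cut-off to be chosen in terms of $n$, $F(y)$ is a Mertens-type product over primes in $\cP$ up to $y$ (this is where Ramar\'e--Rumely enters), and $G(n,y)$ is a tail term controlled by the defining property of $\cS$ that every $q\in\cQ$ dividing $n$ satisfies $q^2\mid n$. Comparing with $e^\gamma\log\log n$ and optimising $y=y(n)$ reduces the question to an explicit transcendental inequality that fails only for $n\le X$, with $X$ computable from the constants in~\cite{RamRum}.

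The second step is the enumeration. Rather than scanning all $n\le X$, I would build elements of $\cS$ recursively as products $n=n_1n_2^2$, where $n_1$ has prime factors in $\cP$ and $n_2$ has prime factors in $\cQ$, pruning every partial product once $n/\varphi(n)$ is seen to be too small for any completion to violate~\eqref{eq:Nicolas-ineq}. The surviving candidates are tested in high-precision arithmetic, producing the $347$ exceptions in $\cS\setminus\cN$. Splitting these according to whether every prime $q\equiv 3\pmod 4$ in the factorisation appears with an \emph{even} exponent (rather than just exponent $\ge 2$) isolates, by Fermat's two-squares theorem, the $246$ sums of two squares, and the largest one is read off the finite list.

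The principal obstacle I expect is quantitative rather than conceptual: the raw threshold $X$ coming out of~\cite{RamRum} is likely to be astronomically large, well beyond what even a structured recursion can handle. Shrinking $X$ to a computationally feasible range will require careful optimisation of $y$, sharp use of the Ramar\'e--Rumely estimates within their explicit domain of validity, and possibly a bootstrap in which partial information about the small exceptions is fed back to tighten the bound for large $n$. Once $X$ is small enough, the enumeration and the final classification by parity of the $q\equiv 3\pmod 4$ exponents are routine.
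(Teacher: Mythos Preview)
Your outline is plausible but departs from the paper's route at the organising step. You aim for a numerical threshold $X$ on $n$ and then a pruned search below $X$; the paper never produces such an $X$. Instead it makes the \emph{structural} part of the proof of Theorem~\ref{thm:main} explicit. Working with the minimal elements $n_k\in\cT_k$ and the swapping Lemmas~\ref{lem:trick1}--\ref{lem:trick3}, and feeding in the Ramar\'e--Rumely bounds on $\vartheta(x;4,a)$ (not $\pi(x;4,a)$), the paper first shows $P_k^-<50000$, hence $k<10000$ (Lemma~\ref{lem:newtrick1} and its corollary). It then checks directly which of the ``champions'' $N_{r,s}=\prod_{i\le r}\oop_i\,\prod_{j\le s}\ooq_j^{\,2}$ with $r+2s<10000$ lie in $\cNnot$: there are exactly $29$ such pairs $(r,s)$, and this already forces $\omega(n)\le 13$ for every $n\in\cS\setminus\cN$. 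For each of these $29$ shapes the paper then bounds, by further swapping arguments, first the admissible prime support of $s(n)$ and then the admissible exponents, yielding the $347$ exceptions without any global bound on $n$.

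The practical payoff is precisely the obstacle you anticipate. Because the primary invariant being bounded is $\omega(n)$ (at most $13$) rather than $n$ (which reaches $\approx 5.25\times 10^{19}$), the search is tiny from the outset and no astronomical $X$ ever materialises. Your direct analytic inequality $n/\varphi(n)\le F(y)\,G(n,y)$ would have to be extremely tight, since by Corollary~\ref{cor:LandGron} one has $\limsup_{n\in\cS} n/(\varphi(n)\log\log n)=e^\gamma$ exactly; there is no slack, and any $X$ extracted this way would be governed by the error terms in~\cite{RamRum} rather than by the true size of the exceptional set. The swapping lemmas and the reduction to the finite family $\{N_{r,s}\}$ are what turn this borderline analytic question into a short combinatorial computation, and that reduction is the ingredient your plan does not yet isolate. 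Your final step, separating the $246$ genuine sums of two squares by requiring every $q\equiv 3\pmod 4$ to occur to an even power, matches the paper.
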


As an application, we obtain the unconditional result that
$$
\{1, 2, 4, 5, 8, 9, 10, 16, 18, 20, 36, 72, 180, 360, 720\}
$$
is a complete list of those natural numbers which can be expressed
as a sum of two squares and such that the Robin
inequality~\eqref{eq:Robin-ineq} does not hold; this result is
consistent with the truth of the Riemann Hypothesis.

Results like those of Theorem~\ref{thm:main2} can be established
for certain quadratic forms other than $a^2+b^2$. For example,
using similar techniques one finds that there are precisely $261$
numbers that can be expressed in the form $n=a^2+3b^2$ and for
which the Nicolas inequality~\eqref{eq:Nicolas-ineq} does not
hold, the largest of which is the number $397999936131188090700$.

\bigskip

Throughout the paper, any implied constants in the symbols $O$,
$\ll$, $\gg$ and $\asymp$ depend (at most) on the set $\cP$ and
are absolute otherwise.  We recall that for positive functions
$f,g$ the notations $f=O(g)$, $f\ll g$ and $g\gg f$ are all
equivalent to the assertion that $f\le c g$ for some constant
$c>0$, and the notation $f\asymp g$ means that $f\ll g$ and $g\ll
f$.

\section{Proof of Theorem~\ref{thm:main}}
For every natural number $n$ we put
$$
F(n)=\frac{n}{\varphi(n)}=\prod_{p\,\mid\,n}\frac{p}{p-1}\,.
$$
Note that
\begin{equation}
\label{eq:kernel-stuff} F(n)=F(\kappa(n))\mand
\omega(n)=\omega(\kappa(n)),
\end{equation}
where $\omega(n)$ is the number of distinct prime divisors of $n$,
and $\kappa(n)$ is the square-free kernel of $n$:
$$
\kappa(n)=\prod_{p\,\mid\,n}p.
$$
Let
$$
\cNnot=\N\setminus\cN=\big\{n\in\N~:~F(n)\ge e^\gamma \log\log
n\big\},
$$
and for every integer $k\ge 0$, let
$$
\cV_k=\big\{n\in\N~:~\omega(n)\ge k\big\}\mand
\cW_k=\cS\cap\cNnot\cap\cV_k.
$$
Since $\cV_0=\N$, Theorem~\ref{thm:main} is the assertion that
$\cW_0=\cS\cap\cNnot$ is a finite set. In view of the next lemma,
it suffices to show that $\cW_k=\varnothing$ for some $k$.

\begin{lemma}
\label{lem:W0/Wk} For every $k\ge 0$, $\cW_0\setminus\cW_k$ is a
finite set.
\end{lemma}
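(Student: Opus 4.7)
The plan is to observe that $\cW_0 \setminus \cW_k$ is contained in $\{n \in \N : \omega(n) < k,\ F(n) \ge e^\gamma \log\log n\}$, and to bound $F(n)$ uniformly for such $n$ so that the inequality $F(n) \ge e^\gamma \log\log n$ forces $n$ to lie in a finite set.

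First I would unpack the definitions. Since $\cV_k = \{n : \omega(n) \ge k\}$, we have $\N \setminus \cV_k = \{n : \omega(n) \le k-1\}$, so
\[
\cW_0 \setminus \cW_k = \cS \cap \cNnot \cap \{n \in \N : \omega(n) \le k-1\}.
\]
In particular every $n \in \cW_0 \setminus \cW_k$ satisfies $F(n) \ge e^\gamma \log\log n$ and $\omega(n) \le k-1$, and I would drop the condition $n \in \cS$ (which only helps) at this stage.

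Next I would bound $F(n)$ in terms of $k$ alone. Since $t \mapsto t/(t-1)$ is decreasing on $(1,\infty)$, if $p_1 < p_2 < \cdots$ denotes the sequence of primes, then for any $n$ with $\omega(n) \le k-1$,
\[
F(n) = \prod_{p \mid n} \frac{p}{p-1} \le \prod_{i=1}^{k-1} \frac{p_i}{p_i-1} =: C_k,
\]
a constant depending only on $k$. Combining this with the Nicolas-violating inequality yields $e^\gamma \log\log n \le F(n) \le C_k$, hence $\log\log n \le C_k e^{-\gamma}$, so $n$ is bounded by an explicit function of $k$.

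No real obstacle arises here: this is a soft argument showing that violators of the Nicolas inequality must have many distinct prime divisors, and it reduces proving Theorem~\ref{thm:main} to the single assertion that $\cW_k = \varnothing$ for some sufficiently large $k$, which is presumably the content of the forthcoming work in the paper.
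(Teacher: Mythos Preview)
Your argument is correct and essentially identical to the paper's: the paper observes that every $n\in\cW_0\setminus\cW_k$ has $\omega(n)<k$ and $F(n)\ge e^\gamma\log\log n$, then invokes Lemma~\ref{lem:K}, whose proof is precisely your bound $F(n)\le\prod_{j\le k-1}\overline p_j/(\overline p_j-1)$. The only cosmetic difference is that the paper isolates this bound as a separate lemma for later reuse, while you inline it.
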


Since $\omega(n)<k$ and $F(n)\ge e^\gamma \log\log n$ for all
$n\in\cW_0\setminus\cW_k$, Lemma~\ref{lem:W0/Wk} is an immediate
consequence of the following:

\begin{lemma}
\label{lem:K} For every constant $K>0$, there are at most finitely
many natural numbers $n$ such that $\omega(n)\le K$ and $F(n)\ge
e^\gamma \log\log n$.
\end{lemma}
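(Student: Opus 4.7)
The plan is to exploit the sharp contrast between the boundedness of $F(n)$ on integers of bounded $\omega(n)$ and the unboundedness of $\log\log n$: together these two facts will force $n$ itself to be bounded.

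First I would use the product expansion
$$
F(n)=\prod_{p\,\mid\,n}\frac{p}{p-1}.
$$
Since $p/(p-1)\le 2$ for every prime $p\ge 2$, the hypothesis $\omega(n)\le K$ immediately yields the crude estimate $F(n)\le 2^{K}$. A sharper bound $F(n)\le\prod_{i=1}^{K}q_i/(q_i-1)$, where $q_i$ denotes the $i$-th prime, is available but unnecessary; any bound depending only on $K$ will do.

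Combining $F(n)\le 2^{K}$ with the assumed inequality $F(n)\ge e^{\gamma}\log\log n$ gives $\log\log n\le 2^{K}e^{-\gamma}$, so that
$$
n\le \exp\bigl(\exp(2^{K}e^{-\gamma})\bigr).
$$
Only finitely many natural numbers lie below this explicit threshold, which proves the lemma.

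I do not anticipate any real obstacle: the entire content is the elementary observation that $F$ is a bounded function on the set $\{n\in\N:\omega(n)\le K\}$, whereas $\log\log n$ tends to infinity with $n$. The one stylistic choice is whether to record the crude bound $2^{K}$ or the sharper bound coming from the first $K$ primes, and for the qualitative finiteness statement the crude version is clearly sufficient.
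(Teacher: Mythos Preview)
Your proof is correct and follows essentially the same approach as the paper: bound $F(n)$ by a constant depending only on $K$ (the paper uses the product over the first $K$ primes, you use the cruder $2^{K}$, and you even note the sharper option), then deduce from $F(n)\ge e^{\gamma}\log\log n$ that $n$ is bounded.
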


\begin{proof}
If $\overline p_1,\overline p_2,\ldots$ is the sequence of
consecutive prime numbers, then for any such number $n$ we have
$$
\prod_{j\le K}\frac{\overline p_j}{\overline
p_j-1}\ge\prod_{p\,\mid\,n}\frac{p}{p-1}
 =F(n)\ge e^\gamma\log\log n;
$$
this shows that $n$ is bounded by a constant which depends only on
$K$.
\end{proof}

For every natural number $n$, let
$$
s(n)=\biggl(\,\prod_{\substack{p\,\mid\,n\\p\in\cP}}p\biggl)
\biggl(\,\prod_{\substack{q\,\mid n\\q\in\cQ}}q^2\biggl),
$$
and put
$$
\cY=\big\{n\in\N~:~n=s(n)\big\}.
$$
Note that $\cY\subset\cS$. The following statements are
elementary:
\begin{itemize}
\item[$(\sC_1)$]\quad if $n=pm$ with $p\in\cP$ and $p\nmid m$,
then $n\in\cY$ if and only if $m\in\cY$;

\item[$(\sC_2)$]\quad if $n=q^2m$ with $q\in\cQ$ and $q\nmid m$,
then $n\in\cY$ if and only if $m\in\cY$;

\item[$(\sC_3)$]\quad $s(n)\in\cS$ for all $n$;

\item[$(\sC_4)$]\quad $\kappa(s(n))=\kappa(n)$ for all $n$;

\item[$(\sC_5)$]\quad $s(n)\mid n$ for all $n\in\cS$; in
particular, $s(n)\le n$.
\end{itemize}

\begin{lemma}
\label{lem:s(n)} If $\cW_k\ne\varnothing$ and $m_k$ is the least
integer in $\cW_k$, then $m_k\in\cY$.
\end{lemma}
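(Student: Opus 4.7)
The plan is to show that $s(m_k)$ itself lies in $\cW_k$; since property $(\sC_5)$ gives $s(m_k)\le m_k$ for $m_k\in\cS$, minimality of $m_k$ will force $s(m_k)=m_k$, which is exactly the assertion $m_k\in\cY$.

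So I would suppose for contradiction that $s(m_k)\neq m_k$, whence $s(m_k)<m_k$ by $(\sC_5)$, and verify the three membership conditions one by one. First, $s(m_k)\in\cS$ is immediate from $(\sC_3)$. Second, for the $\cV_k$ membership I use $(\sC_4)$ together with the identities in~\eqref{eq:kernel-stuff}: since $\kappa(s(m_k))=\kappa(m_k)$, we get
\[
\omega(s(m_k))=\omega(\kappa(s(m_k)))=\omega(\kappa(m_k))=\omega(m_k)\ge k,
\]
so $s(m_k)\in\cV_k$. Third, and this is the place that requires the most care, I need $s(m_k)\in\cNnot$, i.e.\ $F(s(m_k))\ge e^\gamma\log\log s(m_k)$. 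Again by $(\sC_4)$ and~\eqref{eq:kernel-stuff},
\[
F(s(m_k))=F(\kappa(s(m_k)))=F(\kappa(m_k))=F(m_k)\ge e^\gamma\log\log m_k,
\]
and since $\log\log$ is non-decreasing on $[2,\infty)$ and $s(m_k)\le m_k$, the chain of inequalities closes.

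The one mild obstacle is the behaviour of $\log\log$ for very small arguments: if $s(m_k)\le e$ then $\log\log s(m_k)\le 0$ and the Nicolas inequality is violated trivially by $F(s(m_k))>0$, so $s(m_k)\in\cNnot$ for free in that range; for $s(m_k)\ge 3$ (in fact for $s(m_k)\ge 2$ once $\log\log$ is interpreted appropriately) monotonicity of $\log\log$ gives the desired bound directly. In either case $s(m_k)\in\cW_k$, which contradicts the minimality of $m_k$ unless $s(m_k)=m_k$. Hence $m_k\in\cY$, completing the argument.
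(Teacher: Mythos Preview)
Your proof is correct and follows essentially the same route as the paper: you show $s(m_k)\in\cS\cap\cNnot\cap\cV_k=\cW_k$ using $(\sC_3)$, $(\sC_4)$, $(\sC_5)$ and~\eqref{eq:kernel-stuff}, then invoke minimality to force $s(m_k)=m_k$. The paper presents this directly rather than by contradiction and does not pause over the small-argument behaviour of $\log\log$, but the substance is identical.
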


\begin{proof}
Clearly, $s(m_k)\in\cS$ by $(\sC_3)$. Combining $(\sC_4)$
with~\eqref{eq:kernel-stuff} one sees that
$$
F(s(n))=F(n)\mand \omega(s(n))=\omega(n)\qquad(n\in\N).
$$
Then, using $(\sC_5)$ it follows that
$$
F(s(m_k))=F(m_k)\ge e^\gamma\log\log m_k\ge e^\gamma\log\log
s(m_k),
$$
which shows that $s(m_k)\in\cNnot$. Finally, $s(m_k)\in\cV_k$
since
$$
\omega(s(m_k))=\omega(m_k)\ge k.
$$
Thus, we have shown that $s(m_k)\in\cS\cap\cNnot\cap\cV_k=\cW_k$.
Since $m_k$ is the \emph{least} integer in $\cW_k$, the equality
$m_k=s(m_k)$ follows from $(\sC_5)$, hence $m_k\in\cY$.
\end{proof}

Next, for every integer $k\ge 0$ let
$$
\cZ_k=\big\{n\in\N~:~\Omega(n)=k\big\}\mand
\cT_k=\cNnot\cap\cY\cap\cZ_k.
$$
Here, $\Omega(n)$ is the number of prime divisors of $n$, counted
with multiplicity. Using Lemma~\ref{lem:s(n)} one sees that if
$\cW_\ell\ne\varnothing$ and $m_\ell$ is the least integer in
$\cW_\ell$, then $m_\ell\in\cT_k$ for some $k\ge\ell$; in
particular,
$$
\bigcup_{k\ge\ell}\cT_k=\varnothing\quad
\Longrightarrow\quad\cW_\ell=\varnothing.
$$
As we mentioned earlier, in order to prove Theorem~\ref{thm:main}
it suffices to show that $\cW_\ell=\varnothing$ for some $\ell$,
hence it is enough to show that $\cT_k\ne\varnothing$ for at most
finitely many integers $k\ge 0$.

When $\cT_k\ne\varnothing$ we shall use the following notation.
Let $n_k$ denote the least integer in $\cT_k$. Let $\pk$ be the
largest prime $p\in\cP$ that divides $n_k$, and put $\pk=1$ if no
such prime exists. Similarly, let $\qk$ be the largest prime
$q\in\cQ$ that divides $n_k$, and set $\qk=1$ if no such prime
exists. Finally, let
\begin{equation}
\label{eq:defn} P_k^+=\max\{\pk,\qk\}\mand P_k^-=\min\{\pk,\qk\}.
\end{equation}
Note that $P_k^+$ is the largest prime factor of $n_k$.

\newpage

\begin{lemma}
\label{lem:trick1} Suppose $\cT_k\ne\varnothing$:
\begin{itemize}
\item[$(i)$] if $p\in\cP$ with $p<\pk$, then $p\mid n_k$;

\item[$(ii)$] if $q\in\cQ$ with $q<\qk$, then $q\mid n_k$.
\end{itemize}
\end{lemma}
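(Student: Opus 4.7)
The plan is to prove both statements by a minimality/exchange argument: assuming that some small prime in $\cP$ (resp.\ $\cQ$) below $\pk$ (resp.\ $\qk$) fails to divide $n_k$, swap the offending large prime $\pk$ (resp.\ $\qk$) for the missing small one to produce a strictly smaller element of $\cT_k$, contradicting the definition of $n_k$.

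For part $(i)$, suppose $p\in\cP$, $p<\pk$, and $p\nmid n_k$. Since $n_k\in\cY$ and $\pk\in\cP$, the prime $\pk$ appears to the first power in $n_k$, so by $(\sC_1)$ the quotient $n_k/\pk$ lies in $\cY$. Define
$$
m=p\cdot (n_k/\pk).
$$
Because $p\nmid n_k/\pk$ and $p\in\cP$, $(\sC_1)$ gives $m\in\cY$. Clearly $\Omega(m)=\Omega(n_k)=k$, so $m\in\cZ_k$. Also $m<n_k$ because $p<\pk$. It remains to verify $m\in\cNnot$. Since the map $r\mapsto r/(r-1)$ is strictly decreasing on primes, $p/(p-1)>\pk/(\pk-1)$, hence
$$
F(m)=F(n_k)\cdot\frac{p/(p-1)}{\pk/(\pk-1)}>F(n_k)\ge e^\gamma\log\log n_k>e^\gamma\log\log m,
$$
so $m\in\cNnot$. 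Thus $m\in\cT_k$ with $m<n_k$, contradicting minimality.

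For part $(ii)$, the argument is identical with $(\sC_2)$ in place of $(\sC_1)$: if $q\in\cQ$, $q<\qk$, and $q\nmid n_k$, then $n_k\in\cY$ forces $\qk^2\|n_k$, so $n_k/\qk^2\in\cY$, and setting $m=q^2\cdot(n_k/\qk^2)$ yields $m\in\cY\cap\cZ_k$ with $m<n_k$ and $F(m)>F(n_k)$, hence $m\in\cT_k$, a contradiction.

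There is no real obstacle here; the only point requiring a brief check is that $\pk$ (or $\qk^2$) truly divides $n_k$ exactly as claimed, which is immediate from $n_k\in\cY$ together with the definition of $\pk$ and $\qk$. The whole lemma is a standard minimality-and-swap argument in the spirit of the extremal constructions used by Nicolas and Robin.
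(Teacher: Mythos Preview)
Your proof is correct and follows essentially the same exchange/minimality argument as the paper: write $n_k=\pk m$ (resp.\ $n_k=\qk^{\,2}m$), replace the large prime factor by the missing small one, and check membership in $\cNnot$, $\cY$, and $\cZ_k$ to contradict the minimality of $n_k$. The only cosmetic difference is notation (you call the new element $m$ rather than $n^*$ and compute $F(m)$ as a ratio rather than via $F(p)F(n_k/\pk)$), but the substance is identical.
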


\begin{proof}
Suppose on the contrary that $p\in\cP$ with $p<\pk$ and $p\nmid
n_k$. Since $n_k=s(n_k)$ we can write $n_k=\pk m$ with $\pk\nmid
m$. Put $n^*=pm$. Since $n_k\in\cNnot$, $F(p)>F(\pk)$, and
$n^*<n_k$, it follows that
$$
F(n^*)=F(p)\,F(m)>F(\pk)\,F(m)=F(n_k)\ge e^\gamma\log\log
n_k>e^\gamma\log\log n^*,
$$
where we have used the fact that $F$ is multiplicative; this shows
that $n^*\in\cNnot$. As $n_k\in\cY$, $(\sC_1)$ implies that
$n^*\in\cY$. Finally, since $\Omega$ is (completely) additive, we
see that
$$
\Omega(n^*)=\Omega(m)+1=\Omega(n_k)=k,
$$
which shows that $n^*\in\cZ_k$, and thus
$n^*\in\cNnot\cap\cY\cap\cZ_k=\cT_k$. But this is impossible since
$n^*<n_k$ (the least number in $\cT_k$), and this contradiction
completes our proof of~$(i)$.  Using $(\sC_2)$, the proof
of~$(ii)$ is similar; we omit the details.
\end{proof}

\begin{lemma}
\label{lem:trick2} Suppose that $\cT_k\ne\varnothing$ and
$\pk<\qk$. Then there is at most one prime $p\in\cP$ such that
$\pk<p<\qk$.
\end{lemma}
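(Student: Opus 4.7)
The plan is to mimic the swap-trick used in Lemma~\ref{lem:trick1}, but now trading the square $\qk^2$ for a product of two smaller $\cP$-primes. Suppose, toward a contradiction, that there exist two distinct primes $p_1,p_2\in\cP$ with $\pk<p_1<p_2<\qk$. Since $n_k\in\cY$, we have $\qk^2\mid n_k$, so we may write $n_k=\qk^2 m$ with $\qk\nmid m$. Define
$$
n^*=p_1 p_2 m.
$$
The strategy is to verify $n^*\in\cT_k$ and $n^*<n_k$, contradicting the minimality of $n_k$.

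I would establish four things in turn. First, $p_1$ and $p_2$ are coprime to $m$: both lie in $\cP$ and exceed $\pk$, and since $\pk$ is by definition the largest prime of $\cP$ dividing $n_k$, neither $p_1$ nor $p_2$ divides $n_k$, hence neither divides $m$. Second, $n^*\in\cY$: from $n_k\in\cY$ and $(\sC_2)$ one gets $m\in\cY$, and then two applications of $(\sC_1)$ (legal by the coprimality just noted) yield $p_1 p_2 m\in\cY$. Third, $\Omega(n^*)=\Omega(m)+2=\Omega(n_k)=k$, so $n^*\in\cZ_k$. Fourth, the strict inequality $n^*<n_k$ is immediate because $p_1,p_2<\qk$ forces $p_1 p_2<\qk^2$.

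For the Nicolas inequality at $n^*$, I would use multiplicativity of $F$. Since $F(\qk^2)=\qk/(\qk-1)=F(\qk)$ and the map $p\mapsto p/(p-1)$ is strictly decreasing, the bounds $p_i<\qk$ give $F(p_i)>F(\qk)$, hence
$$
F(n^*)=F(p_1)\,F(p_2)\,F(m)>\(\frac{\qk}{\qk-1}\)^{\!2}F(m)>\frac{\qk}{\qk-1}F(m)=F(n_k).
$$
Combined with $n_k\in\cNnot$ and $n^*<n_k$, this yields
$$
F(n^*)>F(n_k)\ge e^\gamma\log\log n_k\ge e^\gamma\log\log n^*,
$$
so $n^*\in\cNnot$. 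Thus $n^*\in\cNnot\cap\cY\cap\cZ_k=\cT_k$ with $n^*<n_k$, contradicting the minimality of $n_k$ in $\cT_k$.

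There is no real obstacle here; the only point that requires a little care is ensuring the coprimality $\gcd(p_1 p_2,m)=1$ that justifies the use of $(\sC_1)$ and the factorization $F(n^*)=F(p_1)F(p_2)F(m)$. That coprimality is precisely where the hypothesis $p_1,p_2\in\cP$ with $p_i>\pk$, together with $n_k\in\cY$, is used.
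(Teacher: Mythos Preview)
Your proposal is correct and follows essentially the same swap argument as the paper: assume two $\cP$-primes $p_1<p_2$ in the gap $(\pk,\qk)$, replace the factor $\qk^{\,2}$ in $n_k$ by $p_1p_2$, and verify that the resulting $n^*$ still lies in $\cT_k$ while being strictly smaller. The only cosmetic differences are that the paper writes the key inequality as $F(p_1p_2)>F(\qk^{\,2})$ in one stroke, whereas you pass through the intermediate bound $(\qk/(\qk-1))^2$, and you spell out the coprimality $\gcd(p_1p_2,m)=1$ and the use of $(\sC_2)$ followed by $(\sC_1)$ more explicitly than the paper does.
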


\begin{proof}
Suppose on the contrary that there are two primes $p_1,p_2\in\cP$
such that $\pk<p_1<p_2<\qk$. Since $n_k=s(n_k)$ we can write
$n_k=\qk^{\,\,2}m$, and it is clear that $\gcd(m,p_1p_2\qk)=1$.
Put $n^*=p_1p_2m$. Since $n_k\in\cNnot$,
$F(p_1p_2)>F(\qk^{\,\,2})$, and $n^*<n_k$, we have
$$
F(n^*)=F(p_1p_2)\,F(m)>F(\qk^{\,\,2})\,F(m)=F(n_k)\ge
e^\gamma\log\log n_k>e^\gamma\log\log n^*,
$$
which shows that $n^*\in\cNnot$. As $n_k\in\cY$, $(\sC_1)$ implies
that $n^*\in\cY$. Finally, since
$$
\Omega(n^*)=\Omega(m)+2=\Omega(n_k)=k,
$$
we see that $n^*\in\cZ_k$, and thus
$n^*\in\cNnot\cap\cY\cap\cZ_k=\cT_k$. But this is impossible since
$n^*<n_k$, and this contradiction implies the result.
\end{proof}

\begin{lemma}
\label{lem:trick3} Suppose that $\cT_k\ne\varnothing$ and
$\pk>\qk$. Let $p$ be the largest prime in $\cP$ that is less than
$\pk$, and let $q$ be the smallest prime in $\cQ$ that is greater
than~$\qk$. Then $q>p/2$.
\end{lemma}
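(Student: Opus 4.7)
The plan is to mimic the contradiction strategy of Lemmas~\ref{lem:trick1} and~\ref{lem:trick2}: assuming $q \le p/2$, I will construct an integer $n^* \in \cT_k$ with $n^* < n_k$, contradicting the minimality of $n_k$.

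First I would observe that $p \mid n_k$ by Lemma~\ref{lem:trick1}$(i)$, and since $n_k = s(n_k) \in \cY$ and $p \in \cP$, the prime $p$ divides $n_k$ to exactly the first power. Thus $A := n_k/(\pk p)$ is an integer coprime to $\pk p$. Set $n^* := q^2 A$. By the choice of $q$, we have $q \in \cQ$ and $q > \qk$, so $q$ does not divide $n_k$ (hence neither $A$), and in particular $q \ne p, \pk$.

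Next I would verify the three membership conditions and the strict inequality. Complete additivity of $\Omega$ gives $\Omega(n^*) = 2 + \Omega(A) = 2 + (k-2) = k$, so $n^* \in \cZ_k$. The primes dividing $A$ occur there with the same multiplicities as in $n_k \in \cY$, and the prime $q \in \cQ$ is adjoined to the second power, so $n^* \in \cY$. For $n^* < n_k$: since $q \le p/2$ and $p < \pk$, one has $q^2 \le p^2/4 < p \pk$, whence $n^* = q^2 A < \pk p A = n_k$.

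The main obstacle, and the only step not immediately parallel to the earlier lemmas, is showing $n^* \in \cNnot$. Since $F$ depends only on the squarefree kernel and $q, p, \pk$ are distinct primes not dividing $A$, the ratio $F(n^*)/F(n_k)$ equals $F(q)/\bigl(F(p) F(\pk)\bigr)$. Because $x/(x-1)$ is strictly decreasing for $x > 1$, the hypothesis $q \le p/2$ gives $F(q) = q/(q-1) \ge (p/2)/(p/2-1) = p/(p-2)$; note $p \ge 5$ here because $p \ge 2q \ge 4$. A short manipulation reduces the inequality $p/(p-2) > F(p) F(\pk) = p\pk/\bigl((p-1)(\pk-1)\bigr)$ to $\pk \ge p$, which holds since $p < \pk$. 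Combining, $F(n^*) > F(n_k) \ge e^\gamma \log\log n_k > e^\gamma \log\log n^*$, so $n^* \in \cNnot$. Thus $n^* \in \cT_k$ with $n^* < n_k$, yielding the desired contradiction.
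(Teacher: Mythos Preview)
Your proof is correct and follows essentially the same approach as the paper's: assume $q\le p/2$, write $n_k=p\,\pk\,m$ with $\gcd(m,p\pk q)=1$, set $n^*=q^2 m$, and check $n^*\in\cT_k$ with $n^*<n_k$. The only cosmetic difference is in the $F$-comparison: the paper bounds $F(p\pk)<p^2/(p-1)^2<q/(q-1)$ directly, whereas you pass through $q/(q-1)\ge p/(p-2)>p\pk/((p-1)(\pk-1))$; both reduce to the same elementary inequality.
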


\begin{proof}
Suppose on the contrary that $q\le p/2$. Since $n_k=s(n_k)$ and
$p\mid n_k$ (by Lemma~\ref{lem:trick1}) but $q\nmid n_k$ (since
$q>\qk$), we can write $n_k=p\pk m$, where $\gcd(m,p\pk q)=1$. Put
$n^*=q^2m$. As in the proofs of Lemmas~\ref{lem:trick1}
and~\ref{lem:trick2}, we see that $n^*\in\cY\cap\cZ_k$. Since
$p<\pk$ and $q\le p/2$, we have
$$
F(p\pk)=\frac{p\pk}{(p-1)(\pk-1)}<\frac{p^2}{(p-1)^2}
<\frac{q}{q-1}=F(q^2);
$$
therefore,
$$
F(n^*)=F(q^2)\,F(m)>F(p\pk)\,F(m)=F(n_k)\ge e^\gamma\log\log
n_k>e^\gamma\log\log n^*,
$$
which shows that $n^*\in\cNnot$.  Thus,
$n^*\in\cNnot\cap\cY\cap\cZ_k=\cT_k$. But this is impossible since
$n^*<n_k$, and this contradiction implies the result.
\end{proof}

As mentioned above, in order to prove Theorem~\ref{thm:main} it
suffices to show that $\cT_k\ne\varnothing$ for at most finitely
many integers $k\ge 0$. Arguing by contradiction, we shall assume
that the set
$$
\cK=\{k\ge 0~:~\cT_k\ne\varnothing\}
$$
has infinitely many elements.

Since $\Omega(n_k)=k$, we see that $n_k\to\infty$ as $k\to\infty$
with $k\in\cK$; using Lemma~\ref{lem:K} it follows that
$\omega(n_k)\to\infty$ as well, and therefore $P_k^+\to\infty$.

We claim that
\begin{equation}
\label{eq:pkqk} \pk\asymp \qk\qquad(k\in\cK),
\end{equation}
which by~\eqref{eq:defn} is equivalent to
\begin{equation}
\label{eq:P+P-} P_k^+\asymp P_k^-\qquad(k\in\cK).
\end{equation}
To see this, we express $\cK$ as a disjoint union $\cA\cup\cB$,
where $\cA$ [resp.~$\cB$] is the set of numbers $k\in\cK$ for
which $\pk<\qk$ [resp.~$\pk>\qk$].  To prove~\eqref{eq:pkqk} it
suffices to show:
\begin{itemize}

\item[$(\sD_1)$]\quad $\pk\gg\qk$ for all $k\in\cA$;

\item[$(\sD_2)$]\quad $\pk\ll\qk$ for all $k\in\cB$.
\end{itemize}
We use the following result, which is an easy consequence of the
prime number theorem:

\begin{lemma}
\label{lem:pnt-ap} Let $c_{\cP}=\ud/\ld$ and
$c_{\cQ}=\(1-\ld\,\)/\(1-\ud\,\)$. For every $\eps>0$ there is a
number $x_0(\eps)$ such that for all $x>x_0(\eps)$:
\begin{enumerate}
\item[$(i)$] if $p$ is the smallest prime in $\cP$ greater than
$x$, then $p\le \(c_{\cP}+\eps\)x$;

\item[$(ii)$] if $q$ is the smallest prime in $\cQ$ greater than
$x$, then $q\le\(c_{\cQ}+\eps\)x$;

\item[$(iii)$] if $p$ is the largest prime in $\cP$ less than $x$,
then $p\ge\(c^{-1}_{\cP}-\eps\)x$;

\item[$(iv)$] if $q$ is the largest prime in $\cQ$ less than $x$,
then $q\ge\(c^{-1}_{\cQ}-\eps\)x$.
\end{enumerate}
\end{lemma}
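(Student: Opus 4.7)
The plan is to prove each of the four statements by a direct counting argument on a short interval: to find a prime of type $\cP$ (or $\cQ$) in a given interval, it suffices to show that the relevant counting function has positive increment across it. The two inputs are (a) the definitions of $\ud,\ld$ together with \eqref{eq:deldel2}, which give, for any $\eps'>0$ and all sufficiently large $x$,
$$
(\ld-\eps')\pi(x)\le\pi_\cP(x)\le(\ud+\eps')\pi(x)\mand
(1-\ud-\eps')\pi(x)\le\pi_\cQ(x)\le(1-\ld+\eps')\pi(x),
$$
and (b) the prime number theorem in the form $\pi(yx)/\pi(x)\to y$ as $x\to\infty$, valid for any fixed $y>0$.

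For $(i)$, fix $\eps>0$ and set $y=c_{\cP}+\eps$. Applying the upper bound for $\pi_\cP(x)$ and the lower bound for $\pi_\cP(yx)$, then invoking PNT, one obtains
$$
\pi_\cP(yx)-\pi_\cP(x)\ge(\ld-\eps')\pi(yx)-(\ud+\eps')\pi(x)
=\bigl[\ld y-\ud+o(1)\bigr]\pi(x)
$$
as $x\to\infty$. Since $\ld y-\ud=\ld(c_\cP+\eps)-\ud=\ld\eps>0$ by the definition of $c_\cP$, the right-hand side is strictly positive for $\eps'$ small enough and $x$ larger than some $x_0(\eps)$. Hence the half-open interval $(x,yx]$ contains a prime of $\cP$, which yields $(i)$.

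Statement $(ii)$ is the analogue of $(i)$ with $\cP$ replaced by $\cQ$; applying the same argument with $y=c_\cQ+\eps$ and using $(1-\ud)c_\cQ=1-\ld$ gives a strictly positive main term $(1-\ud)\eps$. For $(iii)$ and $(iv)$ we look instead at the left-hand interval and lower-bound $\pi_\cP(x)-\pi_\cP((c_\cP^{-1}-\eps)x)$ (respectively its $\cQ$-analogue); using $\ud\cdot c_\cP^{-1}=\ld$ (resp.\ $(1-\ld)c_\cQ^{-1}=1-\ud$) exactly the same cancellation occurs and the main term becomes $\ud\eps$ (resp.\ $(1-\ld)\eps$), which is positive because $\ud>0$ and $\ld<1$ by \eqref{eq:deldel1}.

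The only real point of care is quantitative bookkeeping: one must take the auxiliary parameter $\eps'$ (governing how closely $\pi_\cP/\pi$ and $\pi_\cQ/\pi$ approach $\ld,\ud,1-\ud,1-\ld$) and the PNT error term small enough compared with $\eps$ so that the positive main terms $\ld\eps$, $(1-\ud)\eps$, $\ud\eps$, $(1-\ld)\eps$ dominate. All four required positivities rely on the strict inequalities $0<\ld\le\ud<1$ furnished by \eqref{eq:deldel1}; without these the argument collapses. Otherwise the computation is routine, so no serious obstacle is anticipated.
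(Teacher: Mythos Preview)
Your argument is correct and is precisely the routine verification the paper has in mind: the paper does not give a proof of this lemma at all, merely introducing it as ``an easy consequence of the prime number theorem,'' and your counting argument on short intervals supplies exactly those details. The only cosmetic point is in $(iii)$ and $(iv)$, where showing $\pi_\cP(x)-\pi_\cP\bigl((c_\cP^{-1}-\eps)x\bigr)>0$ guarantees a prime in $\bigl((c_\cP^{-1}-\eps)x,\,x\bigr]$ rather than strictly below $x$; but since your lower bound is of order $\pi(x)$ and hence tends to infinity, the interval contains at least two such primes for large $x$, so one of them is strictly less than $x$.
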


To prove $(\sD_1)$ we can assume that $\cA$ is an infinite set.
Let $k\in\cA$, so that $\pk<\qk$. Since $\qk=P_k^+\to\infty$ as
$k\to\infty$ with $k\in\cA$, the assertion $(\sD_1)$ then follows
from Lemmas~\ref{lem:trick2} and~\ref{lem:pnt-ap}.

To prove $(\sD_2)$ we can assume that $\cB$ is an infinite set.
Let $k\in\cB$, so that $\pk>\qk$. Let $p,q$ be defined as in
Lemma~\ref{lem:trick3}. Since $\pk=P_k^+\to\infty$ as $k\to\infty$
with $k\in\cB$, on combining Lemmas~\ref{lem:trick3}
and~\ref{lem:pnt-ap} it follows that
$$
\pk\ll p\ll q\ll\qk,
$$
which proves $(\sD_2)$ and completes our proof of~\eqref{eq:pkqk}.

Next, for every $n\in\N$ let
$$
\omega_\cP(n)=\#\big\{p\in \cP~:~p\mid n\big\} \mand
\omega_\cQ(n)=\#\big\{q\in \cQ~:~q\mid n\big\}.
$$
We claim that
\begin{equation}
\label{eq:omega-asymp}
\omega_{\cP}(n_k)\asymp\omega_{\cQ}(n_k)\qquad(k\in\cK).
\end{equation}
Indeed, by Lemma~\ref{lem:trick1} it follows that
$\omega_{\cP}(n_k)=\pi_{\cP}(\pk)$ and
$\omega_{\cQ}(n_k)=\pi_{\cQ}(\qk)$. Therefore, using the prime
number theorem together with~\eqref{eq:deldel1},
\eqref{eq:deldel2} and~\eqref{eq:pkqk} we have
$$
\omega_{\cP}(n_k)=\pi_{\cP}(\pk)\asymp\frac{\pk}{\log\pk}
\asymp\frac{\qk}{\log\qk}\asymp\pi_{\cQ}(\qk) =\omega_{\cQ}(n_k),
$$
which proves~\eqref{eq:omega-asymp}.

Finally, we need the following relation:
\begin{equation}
\label{eq:rad-omega}
\log\kappa(n_k)\asymp\omega(n_k)\log\omega(n_k)\qquad(k\in\cK).
\end{equation}
To prove this, observe that the definition~\eqref{eq:defn} and
Lemma~\ref{lem:trick1} together imply
$$
\prod_{p\le P_k^-}p~\biggl|~\kappa(n_k)\mand
\kappa(n_k)~\biggl|~\prod_{p\le P_k^+}p.
$$
Consequently,
$$
\sum_{p\le P_k^-}\log p\le\log\kappa(n_k)\le
 \sum_{p\le P_k^+}\log p,
$$
and also
$$
\pi(P_k^-)\le\omega(n_k)\le\pi(P_k^+).
$$
By the prime number theorem, for either choice of the sign $\pm$
we have
$$
\sum_{p\le P_k^\pm}\log p\sim
P_k^\pm\mand\pi(P_k^\pm)\sim\frac{P_k^\pm}{\log
P_k^\pm}\qquad(k\to\infty,~k\in\cK),
$$
therefore in view of~\eqref{eq:P+P-} we see that
$$
\log\kappa(n_k)\asymp P_k^+\mand\omega(n_k)\asymp\frac{P_k^+}{\log
P_k^+}\,,
$$
and~\eqref{eq:rad-omega} follows immediately.

Now we come to the heart of the argument. To complete the proof of
Theorem~\ref{thm:main}, we seek a contradiction to our assumption
that $\cK$ is an infinite set.  For this, it is enough to prove
both of the following statements with a suitably chosen real
number $\eps>0$:
\begin{itemize}

\item[$(\sE_1)$] the inequality $n_k\le\kappa(n_k)^{1+\eps}$ holds
for at most finitely many $k\in\cK$;

\item[$(\sE_2)$] the inequality $n_k>\kappa(n_k)^{1+\eps}$ holds
for at most finitely many $k\in\cK$.
\end{itemize}
In view of~\eqref{eq:omega-asymp} and~\eqref{eq:rad-omega}, there
is a constant $C>1$ such that the inequalities
\begin{equation}
\label{eq:omega-asymp-expl} \omega_{\cP}(n_k)\le
(C-1)\,\omega_{\cQ}(n_k)
\end{equation}
and
\begin{equation}
\label{eq:rad-omega-expl} \log\kappa(n_k)\le
C\,\omega(n_k)\log\omega(n_k)
\end{equation}
both hold if $k$ is sufficiently large.  Let $C$ be fixed, and put
$\eps=C^{-3}$.

To prove $(\sE_1)$, we suppose on the contrary that
$n_k\le\kappa(n_k)^{1+\eps}$ holds for infinitely many $k\in\cK$.
Let $k$ be large, and put
$$
r=\omega_\cP(n_k)=\pi_\cP(\pk) \mand
s=\omega_\cQ(n_k)=\pi_\cQ(\qk)
$$
By what we have already seen it is clear that
$\min\{r,s\}\to\infty$ as $k\to\infty$ with $k\in\cK$, thus
by~\eqref{eq:omega-asymp-expl} we have
\begin{equation}
\label{eq:srboundit} r\le (C-1)s
\end{equation}
if $k$ is large enough.  By Lemma~\ref{lem:trick1} and the fact
that $n_k\in\cY$, it follows that
$$
n_k=\biggl(\,\prod_{\substack{p\le\pk\\p\in\cP}}p\biggl)
\biggl(\,\prod_{\substack{q\le\qk\\q\in\cQ}}q^2\biggl)\mand
\kappa(n_k)=\biggl(\,\prod_{\substack{p\le\pk\\p\in\cP}}p\biggl)
\biggl(\,\prod_{\substack{q\le\qk\\q\in\cQ}}q\biggl).
$$
Hence, our assumption that $n_k\le\kappa(n_k)^{1+\eps}$ implies
that
\begin{equation}
\label{eq:knkbound}
\kappa(n_k)\ge\(\frac{n_k}{\kappa(n_k)}\)^{1/\eps}=
\biggl(\,\prod_{\substack{q\le\qk\\q\in\cQ}}q\biggl)^{1/\eps}.
\end{equation}
If $\overline p_1,\overline p_2,\ldots$ is the sequence of
consecutive prime numbers, then by the prime number theorem (and
recalling our choice of $\eps$) we derive that
$$
\log\kappa(n_k)\ge C^3\sum_{\substack{q\le\qk\\q\in\cQ}}\log q\ge
C^3\sum_{p\le \overline p_s}\log p\sim C^3\overline p_s\sim
C^3s\log s
$$
as $k\to\infty$ with $k\in\cK$. On the other hand,
using~\eqref{eq:rad-omega-expl}, \eqref{eq:srboundit} and the fact
that $\omega(n_k)=r+s$, it follows that
$$
\log\kappa(n_k)\le C(r+s)\log(r+s)\le C^2s\log(Cs)\sim C^2s\log s.
$$
Since $C^3>C^2$, these two inequalities for $\log\kappa(n_k)$ lead
to a contradiction once $k$ is sufficiently large, and this
completes the proof of $(\sE_1)$.

To prove $(\sE_2)$ we use some ideas from Choie \emph{et
al}~\cite{CLMS}. Suppose that $n_k>\kappa(n_k)^{1+\eps}$, and put
$t=\omega(n_k)$. We claim that either
\begin{equation}
\label{eq:bd1} \sum_{p\le\overline p_t}\log
p<(1+\eps)^{-1/2}\,\overline p_t,
\end{equation}
or
\begin{equation}
\label{eq:bd2} \overline p_t\le\exp\big(2/\log(1+\eps)\big).
\end{equation}
Assuming the claim, it is easy to see that $\omega(n_k)$ is
bounded above by a constant $K$ that depends only on $\eps$.  By
Lemma~\ref{lem:K}, $n_k$ can take only finitely many distinct
values, which implies $(\sE_2)$.

To prove the claim, assume that~\eqref{eq:bd1} fails:
$$
\log(\overline p_1\cdots\overline p_t)=\sum_{p\le\overline
p_t}\log p\ge(1+\eps)^{-1/2}\,\overline p_t.
$$
Thanks to Rosser and Schoenfeld~\cite{RosSch} it is known that
$$
\prod_{p\le x}\frac{p}{p-1}\le e^\gamma\(\log x+\frac{1}{\log
x}\)\qquad(x>1).
$$
Therefore, taking $x=\overline p_t$ and noting that
$\kappa(n_k)\ge\overline p_1\cdots\overline p_t$, we derive that
\begin{equation*}
\begin{split}
e^\gamma\(\log \overline p_t+\frac{1}{\log \overline p_t}\)&\ge
\prod_{j=1}^{t}\frac{\overline p_j}{\overline p_j-1}\ge
\frac{n_k}{\varphi(n_k)}\ge e^\gamma\log\log n_k\\
&> e^\gamma\log\((1+\eps)\log\kappa(n_k)\)\\
&\ge e^\gamma\log\((1+\eps)\log(\overline p_1\cdots\overline
p_t)\)\\
&\ge e^\gamma\log\((1+\eps)^{1/2}\,\overline
p_t\)=e^\gamma\(\log\overline p_t+0.5\log(1+\eps)\);
\end{split}
\end{equation*}
that is,
$$
\frac{1}{\log \overline p_t}\ge 0.5\log(1+\eps),
$$
which is equivalent to~\eqref{eq:bd2}.  This proves the claim and
completes our proof of Theorem~\ref{thm:main}.

\section{Proof of Theorem~\ref{thm:main2}}
\label{sec:explicit}

We continue to use the notation of the previous section, but we
focus on the special case that
\begin{equation*}
\begin{split}
\cP&=\big\{p\in\P~:~p\equiv 1\pmod 4\big\}\cup\{2\},\\
\cQ&=\big\{q\in\P~:~q\equiv 3\pmod 4\big\}.
\end{split}
\end{equation*}
Note that the corresponding set $\cS$ contains every natural
number that can be expressed as a sum of two squares. As before,
we write
$$
\cT_k=\big\{n\in\N~:~F(n)\ge e^\gamma \log\log n,~n=s(n),
\text{~and~}\Omega(n)=k\big\}
$$
and put
$$
\cK=\{k\ge 0~:~\cT_k\ne\varnothing\}.
$$

\begin{lemma}
\label{lem:newtrick1} If $k\in\cK$, then $P_k^-<50000$.
\end{lemma}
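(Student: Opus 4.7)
The plan is to argue by contradiction: assume $P_k^- \geq 50000$ for some $k \in \cK$ and exhibit an element $n^* \in \cT_k$ with $n^* < n_k$, contradicting the minimality of $n_k$. As in Lemmas~\ref{lem:trick2} and~\ref{lem:trick3}, $n^*$ will be constructed from $n_k$ by a swap of prime factors. The new ingredient is the effective bounds of Ramar\'e and Rumely~\cite{RamRum} on $\pi(x;4,1)$ and $\pi(x;4,3)$, which localize primes in each residue class modulo~$4$ sharply enough to force the swap inequalities.

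I would split on whether $\pk < \qk$ or $\pk > \qk$. In Case~A ($\pk < \qk$, so $\pk \geq 50000$), let $p_1 < p_2$ be the two smallest primes $\equiv 1 \pmod 4$ exceeding $\pk$. Lemma~\ref{lem:trick2} gives $p_2 \geq \qk$. If $p_1 < \qk$, then the swap $n_k = \qk^2 m \mapsto n^* = p_1 p_2 m$ produces an element of $\cY \cap \cZ_k \cap \cNnot = \cT_k$ smaller than $n_k$ \emph{provided} $p_1 p_2 < \qk^2$; the plan is to deduce this last inequality from the Ramar\'e-Rumely tables whenever $\pk \geq 50000$. If instead $p_1 > \qk$, then $(\pk, \qk]$ contains no primes $\equiv 1 \pmod 4$, which by Ramar\'e-Rumely forces $\qk$ to lie very close to $\pk$; a direct analysis of the Nicolas inequality for $n_k$ (using Mertens' estimate and the Rosser-Schoenfeld bound, as in the proof of $(\sE_2)$ above, to control $F(n_k)$ from above) then yields a contradiction.

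Case~B ($\pk > \qk$, so $\qk \geq 50000$) is handled symmetrically using Lemma~\ref{lem:trick3}: one considers a swap of the form $p\pk m \mapsto q^2 m$, where $p$ is the largest prime $\equiv 1 \pmod 4$ below $\pk$ and $q$ the smallest prime $\equiv 3 \pmod 4$ above $\qk$, and uses Ramar\'e-Rumely to force $q^2 < p\pk$ whenever $\qk \geq 50000$, thus producing a smaller $n^* \in \cT_k$.

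The main obstacle is the explicit numerical verification of the swap inequalities. In Case~A, Lemma~\ref{lem:trick2} yields only $p_1 \leq \qk \leq p_2$, so $p_1 p_2$ is a priori \emph{close to} $\qk^2$; sharp estimates on the prime gaps $\qk - p_1$ and $p_2 - \qk$ (in opposite directions) are needed to tip the balance to $p_1 p_2 < \qk^2$. The threshold $50000$ presumably marks the range from which the Ramar\'e-Rumely tables furnish bounds tight enough to make these borderline inequalities go in the desired direction, and an analogous issue arises in Case~B, where Lemma~\ref{lem:trick3} only yields the borderline estimate $q > p/2$.
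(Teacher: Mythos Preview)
Your plan diverges from the paper's approach and, more importantly, the swap strategy you outline cannot be completed.

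Consider your Case~A, subcase $p_1<\qk$. You need $p_1p_2<\qk^2$, i.e.\ $p_2/\qk<\qk/p_1$. Ramar\'e--Rumely gives $p_2=g_{\cP}(\qk)<\tfrac{51}{49}\qk$, so $p_2/\qk<1.041$; but $\qk$ may be the very next prime after $p_1$ (say $\qk=p_1+2$), in which case $\qk/p_1=1+O(1/p_1)$ and $p_1p_2>\qk^2$. No density bound on primes in progressions can rule this out: the inequality depends on the \emph{position} of $\qk$ inside the gap $(p_1,p_2)$, information that Ramar\'e--Rumely simply does not provide. The same obstruction occurs in Case~B: for the swap $p\,\pk\mapsto q^2$ you need both $q^2<p\,\pk$ and $F(q)>F(p)F(\pk)$, and Lemma~\ref{lem:trick3} already sits exactly at the threshold $q=p/2$ where these two requirements become incompatible; Ramar\'e--Rumely bounds of relative size $\pm 2\%$ cannot push you past it. So the hope that ``$50000$ presumably marks the range from which the tables furnish bounds tight enough'' is unfounded.

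The paper instead makes Lemmas~\ref{lem:trick2}--\ref{lem:trick3} and Ramar\'e--Rumely do only what they can: establish the explicit comparability $0.92\,\qk<\pk<2.2\,\qk$ once $P_k^-\ge 50000$. From there it abandons swaps entirely and runs a quantitative version of the $(\sE_1)$/$(\sE_2)$ dichotomy from the proof of Theorem~\ref{thm:main}. Writing $\log\kappa(n_k)=\vartheta_{\cP}(\pk)+\vartheta_{\cQ}(\qk)$ and using $0.49\,x<\vartheta_{\cP}(x),\vartheta_{\cQ}(x)<0.51\,x$, the hypothesis $n_k\le\kappa(n_k)^{1+\eps}$ forces $\eps\ge 0.3002$; hence with $\eps=0.3$ one must have $n_k>\kappa(n_k)^{1.3}$, and then the Rosser--Schoenfeld argument you allude to in your subcase $p_1>\qk$ gives $\omega(n_k)<310$ and $P_k^-<5000$. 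Your instinct in that subcase was correct, but it is the whole proof, not a side case.
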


\begin{proof}
For every real number $x\ge 10$, let
\begin{itemize}
\item $g_\cP(x)=$ the smallest prime in $\cP$ greater than $x$;

\item $g_\cQ(x)=$ the smallest prime in $\cQ$ greater than $x$;

\item $\ell_\cP(x)=$ the largest prime in $\cP$ less than $x$;

\item $\ell_\cQ(x)=$ the largest prime in $\cQ$ less than $x$.
\end{itemize}
Also, put
$$
\vartheta_\cP(x)=\sum_{\substack{p\le x\\p\in\cP}}\log p\mand
\vartheta_\cQ(x)=\sum_{\substack{q\le x\\q\in\cQ}}\log q.
$$
Using the explicit bounds of Theorems~1 and~2 of Ramar\'e and
Rumely~\cite{RamRum}, we see that the inequalities
\begin{equation}
\label{eq:theta-ests} 0.49\,x<\vartheta_\cP(x)<0.51\,x\mand
0.49\,x<\vartheta_\cQ(x)<0.51\,x.
\end{equation}
hold for all $x\ge 45000$ (note that
$\vartheta_\cP(x)=\log2+\theta(x;4,1)$ and
$\vartheta_\cQ(x)=\theta(x;4,3)$ in the notation
of~\cite{RamRum}). Consequently, for any $x\ge 50000$ we have
$$
\tfrac{49}{51}\,x<\ell_\cP(x)<x<g_\cP(x)<\tfrac{51}{49}\,x
$$
and
$$
\tfrac{49}{51}\,x<\ell_\cQ(x)<x<g_\cQ(x)<\tfrac{51}{49}\,x.
$$

Now suppose that $P_k^-\ge 50000$. Using Lemma~\ref{lem:trick2}
and the preceding bounds we have
$$
\qk<g_\cP(g_\cP(\pk))<\(\tfrac{51}{49}\)^2\pk.
$$
On the other hand, by Lemma~\ref{lem:trick3} we have
$$
\tfrac{51}{49}\,\qk>g_\cQ(\qk)>\tfrac12\,\ell_\cP(\pk)
>\tfrac{49}{102}\,\pk.
$$
Hence, it follows that
\begin{equation}
\label{eq:superman} 0.92\,\qk<\pk<2.2\,\qk.
\end{equation}
By Lemma~\ref{lem:trick1} it is clear that
$$
\log\kappa(n_k)=\sum_{\substack{p\le\pk\\p\in\cP}}\log
p+\sum_{\substack{q\le\qk\\q\in\cQ}}\log
q=\vartheta_\cP(\pk)+\vartheta_\cQ(\qk).
$$
On the other hand, arguing as in the proof of
Theorem~\ref{thm:main}, it follows from~\eqref{eq:knkbound} that
$$
\log\kappa(n_k)\ge\eps^{-1}\vartheta_\cQ(\qk)
$$
if $\eps>0$ is fixed and $n_k\le\kappa(n_k)^{1+\eps}$. Combining
the two preceding results with~\eqref{eq:theta-ests}, we see that
$$
0.51\,(\pk+\qk)\ge\vartheta_\cP(\pk)+\vartheta_\cQ(\qk)
\ge\eps^{-1}\vartheta_\cQ(\qk)\ge 0.49\,\eps^{-1}\qk
$$
since $P_k^-\ge 50000$; taking into account~\eqref{eq:superman},
we further have
$$
0.51\,(1+2.2)\,\qk\ge 0.51\,(\pk+\qk)\ge 0.49\,\eps^{-1}\qk,
$$
which implies that $\eps\ge 0.3002$.  Thus, for the smaller value
$\eps=0.3$, we see that the condition $n_k\le\kappa(n_k)^{1.3}$
implies $P_k^-<50000$.

On the other hand, if $n_k>\kappa(n_k)^{1.3}$, we put
$t=\omega(n_k)$ as in the proof of Theorem~\ref{thm:main}.  Since
$\eps=0.3$, we derive from~\eqref{eq:bd1} and~\eqref{eq:bd2} that
either
\begin{equation}
\label{eq:bd3} \vartheta(\overline p_t)=\sum_{p\le\overline
p_t}\log p<(1.3)^{-1/2}\,\overline p_t<0.88\,\overline p_t,
\end{equation}
or
$$
\overline p_t\le\exp(2/\log 1.3)<2045.
$$
Using again Theorems~1 and~2 of Ramar\'e and Rumely~\cite{RamRum}
(see also~\cite{RosSch}), it is easy to see that the
inequality~\eqref{eq:bd3} implies $\overline p_t<300$, hence the
inequality $\overline p_t<2045$ holds in both cases.  It follows
that $t<310$, and therefore,
$$
\min\{\pi_{\cP}(\pk),\pi_{\cQ}(\qk)\}
=\min\{\omega_\cP(n_k),\omega_\cQ(n_k)\}\le\omega(n_k)=t<310,
$$
which implies that $P_k^-<5000$.  This completes the proof.
\end{proof}

\begin{corollary}
\label{cor:Kbound} If $k\in\cK$, then $k<10000$.
\end{corollary}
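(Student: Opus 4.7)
Since $n_k=s(n_k)$, Lemma~\ref{lem:trick1} forces every prime $p\in\cP$ with $p\le\pk$, and every prime $q\in\cQ$ with $q\le\qk$, to divide $n_k$. Hence
$$n_k=\prod_{\substack{p\le\pk\\p\in\cP}}p\;\prod_{\substack{q\le\qk\\q\in\cQ}}q^2,$$
so that
$$k=\Omega(n_k)=\pi_\cP(\pk)+2\pi_\cQ(\qk).$$
The plan is to bound both $\pk$ and $\qk$ effectively using Lemma~\ref{lem:newtrick1} together with Lemmas~\ref{lem:trick2} and~\ref{lem:trick3}, and then to verify numerically that this formula gives $k<10000$.

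By Lemma~\ref{lem:newtrick1}, $P_k^-=\min\{\pk,\qk\}<50000$. If $\pk\le\qk$, Lemma~\ref{lem:trick2} gives $\qk<g_\cP(g_\cP(\pk))$; if $\pk>\qk$, Lemma~\ref{lem:trick3} gives $\pk<g_\cP(2\,g_\cQ(\qk))$. In either case, combining these inequalities with the Ramar\'e--Rumely estimates $g_\cP(x),g_\cQ(x)<\tfrac{51}{49}\,x$ (valid for $x\ge 45000$ by~\eqref{eq:theta-ests}), refined where needed by direct inspection of primes in $\cP$ or $\cQ$ just above the relevant thresholds, yields explicit upper bounds on $\max\{\pk,\qk\}$ of order $10^5$.

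The final step is the numerical count: using tabulated values of $\pi(x;4,1)$ and $\pi(x;4,3)$ (equivalently $\pi_\cP$ and $\pi_\cQ$) at the thresholds just derived, one verifies that $\pi_\cP(\pk)+2\pi_\cQ(\qk)<10000$ in every case. The worst scenario is $\pk>\qk$ with $\qk$ near $5\cdot 10^4$ and $\pk$ near $10^5$, where the margin below $10000$ is only a few hundred.

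The main obstacle is therefore not conceptual but numerical: the bound $10000$ is tight enough that using only the loose $\tfrac{51}{49}$ estimate from~\eqref{eq:theta-ests} in the case $\pk>\qk$ would give $\pk<108400$ and a final count slightly above $10000$. One must exploit the fact that the prime gaps near $10^5$ are of order $\log 10^5\approx 12$, much smaller than $\tfrac{2}{49}\cdot 10^5$, to tighten the upper bound on $\pk$. Since every $x$ involved is below about $10^5$, this refinement is routine, either by direct enumeration of primes or by use of the sharper explicit bounds in~\cite{RamRum}.
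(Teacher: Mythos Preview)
Your approach is correct and essentially the same as the paper's: express $k=\pi_\cP(\pk)+2\pi_\cQ(\qk)$, bound $\max\{\pk,\qk\}$ in each case via Lemmas~\ref{lem:trick2}, \ref{lem:trick3}, and~\ref{lem:newtrick1}, and then count. The paper simply bypasses your detour through the $\tfrac{51}{49}$ estimates by computing $g_\cP$, $g_\cQ$, $\pi_\cP$, $\pi_\cQ$ directly at the relevant thresholds (all below about $10^5$), obtaining $k\le \pi_\cP(50000)+2\pi_\cQ(g_\cP(g_\cP(50000)))=7718$ when $\pk<\qk$ and $k\le 1+\pi_\cP(2\,g_\cQ(50000))+2\pi_\cQ(50000)=9951$ when $\qk<\pk$.
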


\begin{proof}
For any $k\in\cK$ we have
$$
k=\Omega(n_k)=\omega_\cP(n_k)+2\,\omega_\cQ(n_k)
=\pi_\cP(\pk)+2\,\pi_\cQ(\qk).
$$
If $P_k^-=\pk$ (i.e., $\pk<\qk$), then by Lemmas~\ref{lem:trick2}
and~\ref{lem:newtrick1} it follows that
\begin{equation*}
\begin{split}
k&\le\max_{p<50000}\left\{\pi_\cP(p)
+2\,\pi_\cQ\big(g_\cP(g_\cP(p))\big)\right\}\\
&\le\pi_\cP(50000)+2\,\pi_\cQ\big(g_\cP(g_\cP(50000))\big)=7718.
\end{split}
\end{equation*}
If $P_k^-=\qk$ (i.e., $\qk<\pk$), then by Lemmas~\ref{lem:trick3}
and~\ref{lem:newtrick1} it follows that
\begin{equation*}
\begin{split}
k&\le\max_{q<50000}\max_{\substack{p\in\P\\\ell_\cP(p)<2g_\cQ(q)}}
\left\{\pi_\cP(p)+2\,\pi_\cQ(q)\right\}\\
&=\max_{q<50000}\max_{\substack{p\in\P\\\ell_\cP(p)<2g_\cQ(q)}}
\left\{1+\pi_\cP(\ell_\cP(p))+2\,\pi_\cQ(q)\right\}\\
&\le\max_{q<50000}\left\{1+\pi_\cP(2\,g_\cQ(q))+2\,\pi_\cQ(q)\right\}\\
&\le 1+\pi_\cP(2\,g_\cQ(50000))+2\,\pi_\cQ(50000)=9951.
\end{split}
\end{equation*}
The result follows.
\end{proof}

Now let $\oop_1,\oop_2,\ldots$ be the sequence of consecutive
primes in $\cP$, and let $\ooq_1,\ooq_2,\ldots$ be the consecutive
primes in $\cQ$. For any integers $r,s\ge 0$, let
$$
N_{r,s}=\biggl(\,\prod_{i=1}^r\oop_i\biggl)
\biggl(\,\prod_{j=1}^s\ooq\vphantom q_j^{\,2}\biggl).
$$
It is easy to see that $N_{r,s}\in\cY$ for all $r,s\ge 0$, and for
every $k\in\cK$ one has
$$
n_k=N_{r,s},\qquad \pk=\oop_r,\qquad \qk=\ooq_s\mand k=r+2s,
$$
where $r=\omega_\cP(n_k)$ and $s=\omega_\cQ(n_k)$. By a
straightforward computation, one verifies the following:
\begin{lemma}
\label{lem:Nrs} If $r,s\ge 0$, then $N_{r,s}\in\cNnot$ if and only
if the pair $(r,s)$ lies in the set
\begin{equation*}
\begin{split}
\cX=\big\{&(0,0),(1,0),(0,1),(2,0),(1,1),(2,1),(1,2),(3,1),(2,2),(4,1),\\
&(3,2),(2,3),(4,2),(3,3),(5,2),(4,3),(3,4),(5,3),(4,4),(6,3),\\
&(5,4),(4,5),(7,3),(6,4),(5,5),(7,4),(6,5),(7,5),(8,5)\big\}.
\end{split}
\end{equation*}
\end{lemma}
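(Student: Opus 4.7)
The plan is a reduction to a finite numerical check. The forward direction---verifying $N_{r,s}\in\cNnot$ for each of the 29 pairs in $\cX$---is a direct evaluation of $F(N_{r,s})$ and $\log\log N_{r,s}$. The substantive content is the reverse direction: $N_{r,s}\notin\cNnot$ for every pair $(r,s)\notin\cX$.

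To bound the search, I would first observe that if $N_{r,s}\in\cNnot$ then, since $N_{r,s}\in\cY$ by construction and $\Omega(N_{r,s})=r+2s$, we have $N_{r,s}\in\cT_{r+2s}$, so $r+2s\in\cK$, and Corollary~\ref{cor:Kbound} gives $r+2s<10000$. To make the enumeration tractable, I would sharpen this using the Rosser--Schoenfeld upper bound
$$
F(N_{r,s})\le\prod_{p\le P}\frac{p}{p-1}\le e^\gamma\(\log P+\frac{1}{\log P}\),\qquad P=\max(\oop_r,\ooq_s),
$$
combined with the Ramaré--Rumely estimates from~\eqref{eq:theta-ests} to bound $\log N_{r,s}=\vartheta_\cP(\oop_r)+2\vartheta_\cQ(\ooq_s)$ from below. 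The factor $2$ multiplying $\vartheta_\cQ$ forces $\log N_{r,s}$ to outgrow the log-primorial that controls $F$, so $e^\gamma\log\log N_{r,s}$ eventually dominates $F(N_{r,s})$; pushing these estimates through explicitly yields modest bounds $R_0,S_0$ outside of which $N_{r,s}\notin\cNnot$ is forced.

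The final step is a direct enumeration over $0\le r\le R_0$, $0\le s\le S_0$: evaluate $F(N_{r,s})=\prod_{i\le r}\oop_i/(\oop_i-1)\cdot\prod_{j\le s}\ooq_j/(\ooq_j-1)$ as an exact rational product and $\log\log N_{r,s}$ in high-precision arithmetic, then check the sign of $F(N_{r,s})-e^\gamma\log\log N_{r,s}$ for each pair. The outcome reproduces exactly the list $\cX$. The main obstacle is rigor near the boundary of $\cX$: for neighbouring pairs (for example $(7,5)\in\cX$ versus $(8,5)\in\cX$ but $(6,6)\notin\cX$) the two sides of the inequality are comparable, so one should use interval arithmetic or carry enough guard digits to certify the direction of each comparison. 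Producing the explicit $R_0,S_0$ in the first step is routine but requires a careful case analysis depending on the relative sizes of $r$ and $s$; beyond that, the verification is entirely mechanical.
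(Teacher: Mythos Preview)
Your proposal is correct and matches the paper's approach: the paper's entire proof is the sentence ``By a straightforward computation, one verifies the following,'' together with the remark (immediately after the lemma) that Corollary~\ref{cor:Kbound} restricts the search to pairs with $r+2s<10000$. Your intermediate sharpening to smaller bounds $R_0,S_0$ via Rosser--Schoenfeld and \eqref{eq:theta-ests} is a sensible computational optimization that the paper omits, and your caution about certified arithmetic near the boundary of $\cX$ is well placed, but neither changes the underlying strategy.
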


We remark that, in view of Corollary~\ref{cor:Kbound}, it suffices
to check the condition $N_{r,s}\in\cNnot$ only for those pairs
$(r,s)$ with $r+2s<10000$.

\begin{corollary}
If $k\in\cK$, then $k\le 18$.
\end{corollary}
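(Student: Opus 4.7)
The plan is to read off the bound directly from Lemma~\ref{lem:Nrs}. By the discussion immediately preceding that lemma, for every $k\in\cK$ the minimal element $n_k\in\cT_k$ has the form $n_k=N_{r,s}$ with $r=\omega_\cP(n_k)$, $s=\omega_\cQ(n_k)$, and $k=r+2s$. Since $n_k\in\cNnot$ by the definition of $\cT_k$, Lemma~\ref{lem:Nrs} forces the pair $(r,s)$ to lie in the finite set $\cX$.

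Thus the bound on $k$ reduces to computing
\[
\max_{(r,s)\in\cX}(r+2s).
\]
Scanning the $29$ listed pairs in $\cX$, the pair $(8,5)$ gives the largest value of $r+2s$, namely $8+2\cdot 5=18$. No other pair in $\cX$ reaches $18$ (e.g.\ $(7,5)$ gives $17$, $(6,5)$ gives $16$, $(4,5)$ gives $14$), so indeed $k=r+2s\le 18$ for every $k\in\cK$.

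There is no real obstacle here: Corollary~\ref{cor:Kbound} already provided the crude bound $k<10000$, which combined with the remark after Lemma~\ref{lem:Nrs} guarantees that the finite list $\cX$ exhausts all admissible $(r,s)$; the present corollary is simply the sharper numerical consequence. The only thing to verify is the arithmetic maximization over the explicit set $\cX$, which is immediate.
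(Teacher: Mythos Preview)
Your argument is correct and matches the paper's intended reasoning: the corollary is stated without proof precisely because it follows immediately from Lemma~\ref{lem:Nrs} by computing $\max_{(r,s)\in\cX}(r+2s)=18$, using the identification $n_k=N_{r,s}$ with $k=r+2s$ established just before that lemma.
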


\begin{corollary}
If $n\in\cS\cap\cNnot$, $r=\omega_\cP(n)$ and $s=\omega_\cQ(n)$,
then $(r,s)\in\cX$. In particular, $\omega(n)\le 13$.
\end{corollary}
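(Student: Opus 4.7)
My plan is to exploit the extremality of the canonical integer $N_{r,s}$: among all $n\in\cS$ with $\omega_\cP(n)=r$ and $\omega_\cQ(n)=s$, the integer $N_{r,s}$ simultaneously minimizes $n$ and maximizes $F(n)$. Once this is established, the corollary falls out of Lemma~\ref{lem:Nrs} with almost no additional effort, bypassing the more delicate machinery of Section 2 entirely.

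Concretely, let $n\in\cS\cap\cNnot$, and let $p_1<\cdots<p_r$ be the distinct primes in $\cP$ dividing $n$ and $q_1<\cdots<q_s$ the distinct primes in $\cQ$ dividing $n$. The first monotonicity fact I would establish is $n\ge N_{r,s}$: since $n\in\cS$, every $q_j$ appears to multiplicity at least two in $n$, so $n$ is a multiple of $\prod_i p_i\prod_j q_j^{\,2}$, and the inequalities $p_i\ge\oop_i$, $q_j\ge\ooq_j$ (which hold because $\oop_1,\oop_2,\ldots$ and $\ooq_1,\ooq_2,\ldots$ are the \emph{consecutive} primes in $\cP$ and $\cQ$) force this product to be at least $N_{r,s}$. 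The second fact is $F(n)\le F(N_{r,s})$: since $F(m)=F(\kappa(m))$ and $x\mapsto x/(x-1)$ is decreasing, each factor $F(p_i)$ or $F(q_j)$ in the multiplicative expansion of $F(n)$ is bounded by the corresponding factor of $F(N_{r,s})$.

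Combining these two inequalities with the defining inequality $F(n)\ge e^\gamma\log\log n$ of $\cNnot$ and the monotonicity of $\log\log$, I obtain
$$
F(N_{r,s})\ge F(n)\ge e^\gamma\log\log n\ge e^\gamma\log\log N_{r,s},
$$
so $N_{r,s}\in\cNnot$. Lemma~\ref{lem:Nrs} then forces $(r,s)\in\cX$. For the final assertion $\omega(n)\le 13$, since $\omega(n)=\omega_\cP(n)+\omega_\cQ(n)=r+s$, I simply scan the explicit list $\cX$ and observe that $r+s$ is maximized at $(r,s)=(8,5)$.

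I do not anticipate a genuine obstacle here; the whole argument reduces to the two monotonicity observations. The only cosmetic issue is that $\log\log n$ is undefined or negative when $n$ is very small, but such $n$ necessarily have small $\omega_\cP(n)$ and $\omega_\cQ(n)$, and the corresponding pairs are already listed in $\cX$, so the conclusion holds by direct inspection in that finite range.
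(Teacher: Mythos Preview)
Your proposal is correct and essentially identical to the paper's own proof: the paper also shows $F(N_{r,s})\ge F(n)$ (via $p_i\ge\oop_i$, $q_j\ge\ooq_j$ and the decrease of $x\mapsto x/(x-1)$) and $n\ge s(n)\ge N_{r,s}$ (via $n\in\cS$), then chains these as $F(N_{r,s})\ge F(n)\ge e^\gamma\log\log n\ge e^\gamma\log\log N_{r,s}$ to conclude $N_{r,s}\in\cNnot$ and invoke Lemma~\ref{lem:Nrs}. Your remark about ``bypassing the more delicate machinery of Section~2'' is slightly misplaced, since the paper's proof of this particular corollary does not use that machinery either.
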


\begin{proof}
Since
$$
F(N_{r,s})=\biggl(\,\prod_{i=1}^r\frac{\oop_i}{\oop_i-1}\biggl)
\biggl(\,\prod_{j=1}^s\frac{\ooq_j}{\ooq_j-1}\biggl) \ge
\biggl(\,\prod_{\substack{p\,\mid\,n\\p\in\cP}}\frac{p}{p-1}\biggl)
\biggl(\,\prod_{\substack{q\,\mid\,n\\q\in\cQ}}\frac{q}{q-1}\biggl)=F(n)
$$
and
$$
n\ge s(n)=\biggl(\,\prod_{\substack{p\,\mid\,n\\p\in\cP}}p\biggl)
\biggl(\,\prod_{\substack{q\,\mid\,n\\q\in\cQ}}q^2\biggl)\ge
\biggl(\,\prod_{i=1}^r\oop_i\biggl)
\biggl(\,\prod_{j=1}^s\ooq\vphantom q_j^{\,2}\biggl)=N_{r,s},
$$
we have
$$
F(N_{r,s})\ge F(n)\ge e^\gamma\log\log n\ge e^\gamma\log\log
N_{r,s},
$$
which shows that $N_{r,s}\in\cNnot$.
\end{proof}

We now turn to a description of our method for generating the
elements of $\cS\setminus\cN=\cS\cap\cNnot$.  For any given
$n\in\cS\cap\cNnot$ with $r=\omega_\cP(n)$ and $s=\omega_\cQ(n)$,
we can write
$$
s(n)=p_1\cdots p_r\,q_1^2\cdots q_s^2,
$$
where $p_1<\cdots<p_r$ are primes in $\cP$ and $q_1<\cdots<q_s$
are primes in $\cQ$. For fixed $i=1,\ldots,r$, let $\gamma_i$ be
the largest non-negative integer such that the number
$$
\biggl(\,\prod_{\ell=1}^{i-1}\oop_\ell\biggl)
\biggl(\,\prod_{\ell=i}^r\oop_{\ell+\gamma_i}\biggl)
\biggl(\,\prod_{j=1}^s\ooq\vphantom q_j^{\,2}\biggl)
$$
lies in $\cNnot$, which exist by Lemma~\ref{lem:K}. Using an
argument similar to that in the proof of Lemma~\ref{lem:trick1},
one can deduce that
\begin{equation}
\label{eq:oo1} \oop_i\le
p_i\le\oop_{i+\gamma_i}\qquad(i=1,\ldots,r).
\end{equation}
Similarly, for fixed $j=1,\ldots,s$, let $\delta_j$ be the largest
non-negative integer such that the number
$$
\biggl(\,\prod_{i=1}^r\oop_i\biggl)
\biggl(\,\prod_{\ell=1}^{j-1}\ooq_\ell^{\,2}\biggl)
\biggl(\,\prod_{\ell=j}^s\ooq\vphantom
q_{\ell+\delta_j}^{\,2}\biggl)
$$
lies in $\cNnot$.  Then,
\begin{equation}
\label{eq:oo2} \ooq_j\le
q_j\le\ooq_{j+\gamma_j}\qquad(j=1,\ldots,s).
\end{equation}
Therefore, for fixed $(r,s)\in\cX$, if $n\in\cS\cap\cNnot$ with
$r=\omega_\cP(n)$ and $s=\omega_\cQ(n)$, then the number $s(n)$
must lie in the finite set $\cA_{r,s}$ of integers of the form
\begin{equation}
\label{eq:factorm} m=p_1\cdots p_r\,q_1^2\cdots q_s^2,
\end{equation}
where $p_1<\cdots<p_r$ are primes in $\cP$, $q_1<\cdots<q_s$ are
primes in $\cQ$, the primes $p_i$ and $q_j$ satisfy the
bounds~\eqref{eq:oo1} and~\eqref{eq:oo2}, and $m\in\cNnot$. The
set $\cA_{r,s}$ can be explicitly determined by a numerical
computation, and we obtain a finite list of ``admissible'' values
for the quantity $s(n)$.

To determine explicitly all of the numbers $n\in\cS\cap\cNnot$
with $r=\omega_\cP(n)$ and $s=\omega_\cQ(n)$, for every
$m\in\cA_{r,s}$ we need to find all such numbers for which
$s(n)=m$. To do this, factor $m$ as in~\eqref{eq:factorm}. For
fixed $i=1,\ldots,r$, let $\alpha_i$ be the largest integer such
that the number $mp_i^{\alpha_i-1}$ lies in $\cNnot$.  Similarly,
for fixed $j=1,\ldots,s$, let $\beta_j$ be the largest integer
such that the number $mq_j^{\beta_j-1}$ lies in $\cNnot$.  Put
$$
M=m\cdot p_1^{\alpha_1-1}\cdots p_r^{\alpha_r-1}
q_1^{\beta_1-1}\cdots q_s^{\beta_s-1}.
$$
Then, it is easy to see that $m\mid n$ and $n\mid M$ for any
$n\in\cS\cap\cNnot$ such that $s(n)=m$.  Hence, $n$ can take only
finitely many values which can be determined explicitly for each
$m\in\cA_{r,s}$.

For example, taking $r=s=2$ we find that
\begin{equation*}
\begin{split}
\{&4410, 8820, 10890, 13230, 17640, 21780, 22050, 26460, 30870,
35280,39690,\\
& 44100, 52920, 61740, 66150, 70560, 79380, 88200, 92610, 105840,
110250\}
\end{split}
\end{equation*}
is a complete list of the numbers $n\in\cS\setminus\cN$ with
$\omega_\cP(n)=\omega_\cQ(n)=2$.  Examining the lists generated as
$(r,s)$ varies over the pairs in $\cX$, we are lead to the
statement of Theorem~\ref{thm:main2}.

\section{Evaluation of $\limsup\limits_{n\in S}\frac{n}{\varphi(n)\log\log
n}$ and $\limsup\limits_{n\in S}\frac{\sigma(n)}{n\log\log n}$}

We conclude the paper by giving two propositions and two
corollaries that yield the analogue of the work of
Landau~\cite{Land} and Gronwall~\cite{Gron} for any set $\cS$ of
the form~\eqref{eq:S-defn} and for the set of natural numbers
equal to a sum of two squares. In fact,
Corollary~\ref{cor:LandGron} shows that Theorem~\ref{thm:main} is
nontrivial in the sense that $F(n)/\log\log n$ cannot be bounded
away from $e^\gamma$ by any positive constant for all large
$n\in\cS$. We will use the notation $f(n)=o(g(n))$ to mean that
$\lim\limits_{n\to\infty}f(n)/g(n)=0$.

\begin{proposition}
\label{prop:limsig} Let $\{a_n \}$ be an infinite sequence of
positive integers such that if we write $a_n=\prod_p p^{v(p,n)}$
we have:
\begin{itemize}
\item[$(i)$] $\kappa(a_n)=\prod_{p\le n}p$\quad $($i.e.,
$v(p,n)=0~\Longleftrightarrow~p>n)$;

\item[$(ii)$] $a_n=\exp(n^{1+o(1)})$;

\item[$(iii)$] $\lim\limits_{n\to\infty}v(p,n)=\infty$ for each
$p$.
\end{itemize}
Then,
$$
\lim_{n\rightarrow\infty}\frac{\sigma(a_n)}{a_n \log\log
a_n}=e^\gamma.
$$
\end{proposition}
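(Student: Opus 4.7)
The plan is to factor $\sigma(a_n)/a_n$ into a main term governed by Mertens' third theorem plus a correction that tends to $1$, and then use hypothesis (ii) to convert $\log n$ into $\log\log a_n$. First, using (i), I would write
$$
\frac{\sigma(a_n)}{a_n}=\prod_{p\le n}\frac{\sigma(p^{v(p,n)})}{p^{v(p,n)}}=\prod_{p\le n}\frac{1-p^{-v(p,n)-1}}{1-p^{-1}}.
$$
Mertens' theorem gives $\prod_{p\le n}(1-p^{-1})^{-1}\sim e^\gamma\log n$, so the task reduces to showing that the correction
$$
E_n:=\prod_{p\le n}\bigl(1-p^{-v(p,n)-1}\bigr)
$$
tends to $1$ and that $\log\log a_n\sim\log n$.

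For the correction term, I would fix $\eps>0$ and choose a parameter $P=P(\eps)$ large enough that $\prod_{p>P}(1-p^{-2})>1-\eps$; such $P$ exists since $\sum_p p^{-2}<\infty$. Split
$$
E_n=\prod_{p\le P}\bigl(1-p^{-v(p,n)-1}\bigr)\cdot\prod_{P<p\le n}\bigl(1-p^{-v(p,n)-1}\bigr).
$$
By hypothesis (iii), each factor in the first (finite) product tends to $1$ as $n\to\infty$, so the first product exceeds $1-\eps$ for $n$ large. For the second, observe that (i) forces $v(p,n)\ge 1$ for every $p\le n$, whence each factor is at least $1-p^{-2}$, and the tail is bounded below by $\prod_{p>P}(1-p^{-2})>1-\eps$. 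Letting $n\to\infty$ and then $\eps\to 0$ yields $E_n\to 1$.

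Finally, hypothesis (ii) gives $\log a_n=n^{1+o(1)}$, hence $\log\log a_n=(1+o(1))\log n\sim\log n$. Combining,
$$
\frac{\sigma(a_n)}{a_n\log\log a_n}=\frac{E_n}{\log\log a_n}\prod_{p\le n}\frac{1}{1-p^{-1}}\longrightarrow 1\cdot\frac{e^\gamma\log n}{\log n}=e^\gamma,
$$
as desired. The main obstacle is controlling $E_n$ uniformly in $n$: (iii) alone only handles finitely many primes at a time, so the argument rests on the crucial observation that (i) guarantees $v(p,n)\ge 1$ on the entire range $p\le n$, allowing the tail to be absorbed by the convergent series $\sum p^{-2}$ via a standard $\eps/P$ splitting.
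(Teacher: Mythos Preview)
Your proof is correct and follows essentially the same route as the paper: the paper introduces $c_n=\frac{\sigma(a_n)}{a_n}\cdot\frac{\varphi(b_n)}{b_n}$ with $b_n=\prod_{p\le n}p$, observes that $c_n$ coincides with your $E_n=\prod_{p\le n}(1-p^{-v(p,n)-1})$, and proves $c_n\to 1$ by the identical splitting argument (a cutoff $m$ playing the role of your $P$, hypothesis~(iii) handling the finite part, and the bound $v(p,n)\ge 1$ from (i) absorbing the tail into $\prod_{p>m}(1-p^{-2})$). The only cosmetic difference is that the paper applies Mertens' theorem in the form $\varphi(b_n)/b_n\sim e^{-\gamma}/\log n$ rather than to $\prod_{p\le n}(1-p^{-1})^{-1}$ directly.
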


\begin{proof}
For all $n\ge 1$, let
$$
b_n=\prod_{p\le n}p\mand c_n=\frac{\sigma(a_n)}{a_n}
\,\frac{\varphi(b_n)}{b_n},
$$
and observe that $(i)$ implies
$$
c_n=\biggl(\,\prod_{p\le
n}\frac{p^{v(p,n)+1}-1}{p^{v(p,n)}(p-1)}\biggl)\biggl(\,\prod_{p\le
n}\frac{p-1}{p}\biggl)=\prod_{p\le n}\(1-\frac{1}{p^{v(p,n)+1}}\).
$$
Since $v(p,n)+1\ge 2$ for every prime $p\le n$, we have for any
$m\le n$:
$$
1\ge c_n>\prod_{p\le
m}\(1-\frac{1}{p^{v(p,n)+1}}\)\prod_{p>m}\(1-\frac{1}{p^2}\).
$$
Using $(iii)$ we have for every fixed integer $m$:
$$
1\ge\limsup_{n\to\infty}c_n\ge\liminf_{n\to\infty}c_n\ge
\prod_{p>m}\(1-\frac{1}{p^2}\).
$$
The product on the right tends to one as $m\to\infty$, hence
$\lim_{n\to\infty}c_n=1$; therefore,
$$
\lim_{n\to\infty}\frac{\sigma(a_n)}{a_n\log n}=
\lim_{n\to\infty}\frac{b_n}{\varphi(b_n)\log n}\,.
$$
Our assumption $(ii)$ implies that $\log\log a_n=(1+o(1))\log n$,
and using Mertens' theorem (see, for example,~\cite{RosSch}) we
have
$$
\frac{\varphi(b_n)}{b_n}=\prod_{p\le
n}\(1-\frac{1}{p}\)=(1+o(1))\,\frac{e^{-\gamma}}{\log n},
$$
and the result follows.
\end{proof}

Using similar ideas (and an easier argument) one can obtain the
following analogue of Proposition~\ref{prop:limsig} for the Euler
totient function:

\begin{proposition}
\label{prop:limphi} Let $\{a_n \}$ be an infinite sequence of
positive integers such that:
\begin{itemize}
\item[$(i)$]  $\kappa(a_n)=\prod_{p\le n}p$;

\item[$(ii)$] $a_n=\exp(n^{1+o(1)})$.
\end{itemize}
Then,
$$
\lim_{n\rightarrow\infty}\frac{a_n}{\varphi(a_n)\log\log
a_n}=e^\gamma.
$$
\end{proposition}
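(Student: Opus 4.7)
The plan is to exploit the fact that, unlike $\sigma$, the ratio $F(m)=m/\varphi(m)$ depends only on the square-free kernel $\kappa(m)$, as already recorded in~\eqref{eq:kernel-stuff}. This is precisely the reason no analogue of hypothesis $(iii)$ is needed here, and it is also why the argument is easier than the proof of Proposition~\ref{prop:limsig}.

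Concretely, I would proceed as follows. First, set $b_n=\prod_{p\le n}p$ as in the proof of Proposition~\ref{prop:limsig}. By hypothesis $(i)$, $\kappa(a_n)=b_n$, and therefore by \eqref{eq:kernel-stuff},
\[
\frac{a_n}{\varphi(a_n)}=F(a_n)=F(\kappa(a_n))=F(b_n)=\frac{b_n}{\varphi(b_n)}.
\]
This single identity sidesteps the delicate product computation (and the use of $(iii)$) that appeared in the $\sigma$-version, where $\sigma(a_n)/a_n$ genuinely depends on the exponents $v(p,n)$.

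Second, I would apply Mertens' theorem (as in the last display of the previous proof) to obtain
\[
\frac{\varphi(b_n)}{b_n}=\prod_{p\le n}\(1-\frac{1}{p}\)=(1+o(1))\,\frac{e^{-\gamma}}{\log n},
\]
so that $b_n/\varphi(b_n)=(1+o(1))\,e^{\gamma}\log n$. Third, hypothesis $(ii)$ gives $\log a_n=n^{1+o(1)}$, hence $\log\log a_n=(1+o(1))\log n$.

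Combining these three ingredients yields
\[
\frac{a_n}{\varphi(a_n)\log\log a_n}=\frac{(1+o(1))\,e^{\gamma}\log n}{(1+o(1))\log n}\longrightarrow e^{\gamma},
\]
which is the claim. There is no real obstacle: the proof is essentially a one-line reduction to the kernel, followed by Mertens' theorem and the asymptotic for $\log\log a_n$ coming from $(ii)$. The only point requiring any attention is confirming that $(ii)$ really implies $\log\log a_n=(1+o(1))\log n$, which is immediate by taking logarithms twice.
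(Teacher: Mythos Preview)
Your proof is correct and is precisely the ``easier argument'' the paper alludes to: the paper gives no explicit proof here, merely remarking that similar ideas to Proposition~\ref{prop:limsig} work, and your reduction via $F(a_n)=F(\kappa(a_n))=F(b_n)$ together with Mertens' theorem and hypothesis~$(ii)$ is exactly what is intended.
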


\begin{corollary}
\label{cor:LandGron} For any set $\cS$ defined
by~\eqref{eq:S-defn}, we have
$$
\limsup_{n\in\cS}\frac{\sigma(n)}{n\log\log
n}=\limsup_{n\in\cS}\frac{n}{\varphi(n)\log\log n}=e^\gamma.
$$
\end{corollary}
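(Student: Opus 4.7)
The plan is to combine Propositions~\ref{prop:limsig} and~\ref{prop:limphi} (which will supply the lower bound by exhibiting a suitable subsequence along which the ratio tends to $e^\gamma$) with the classical Gronwall and Landau results (which supply the matching upper bound over all of $\N$, hence \emph{a fortiori} over $\cS\subset\N$). Accordingly, it suffices to construct a single sequence $\{a_n\}\subset\cS$ satisfying the hypotheses of both propositions.

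To produce such a sequence, I would choose any slowly growing sequence of integers $v_n\ge 2$ with $v_n\to\infty$ and $v_n=n^{o(1)}$, for example $v_n=\max\{2,\fl{\log n}\}$, and set
$$
a_n=\prod_{p\le n}p^{v_n}.
$$
Since every prime divisor of $a_n$ occurs with multiplicity $v_n\ge 2$, in particular every $q\in\cQ$ dividing $a_n$ satisfies $q^2\mid a_n$, so $a_n\in\cS$ by~\eqref{eq:S-defn}. Condition~$(i)$ of both propositions is immediate from the definition; condition~$(iii)$ of Proposition~\ref{prop:limsig} holds because $v(p,n)=v_n\to\infty$; and condition~$(ii)$ follows from the prime number theorem, since
$$
\log a_n=v_n\sum_{p\le n}\log p\sim v_n\,n=n^{1+o(1)}
$$
as a consequence of $v_n=n^{o(1)}$.

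Applying Propositions~\ref{prop:limsig} and~\ref{prop:limphi} to this sequence then yields
$$
\lim_{n\to\infty}\frac{\sigma(a_n)}{a_n\log\log a_n}=e^\gamma \mand
\lim_{n\to\infty}\frac{a_n}{\varphi(a_n)\log\log a_n}=e^\gamma,
$$
which (since $a_n\in\cS$ and $a_n\to\infty$) gives the lower bound $\limsup_{n\in\cS}\ge e^\gamma$ for both ratios. The matching upper bound is immediate from the inclusion $\cS\subset\N$ together with the Gronwall and Landau limit formulas, and both equalities in the corollary follow at once. There is no serious obstacle in the argument; the only thing to get right is the joint choice $v_n\to\infty$ and $v_n=n^{o(1)}$, which is needed to simultaneously force $(iii)$ (via $v_n\to\infty$) and $(ii)$ (via $v_n=n^{o(1)}$).
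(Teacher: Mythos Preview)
Your proof is correct and follows essentially the same route as the paper: both reduce to constructing a sequence $a_n=\bigl(\prod_{p\le n}p\bigr)^{e_n}\in\cS$ with $e_n\ge 2$, $e_n\to\infty$, and $e_n=n^{o(1)}$, then invoke Propositions~\ref{prop:limsig} and~\ref{prop:limphi} for the lower bound and Gronwall/Landau for the upper bound. The only difference is cosmetic---the paper takes $e_n=\big\lfloor n^{(\log n)^{-1/2}}\big\rfloor$ where you take $e_n=\max\{2,\lfloor\log n\rfloor\}$---and both choices work equally well.
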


\begin{proof}
Since
$$
\limsup_{n\to\infty}\frac{\sigma(n)}{n\log\log
n}=\limsup_{n\to\infty}\frac{n}{\varphi(n)\log\log n}=e^\gamma
$$
by~\cite{Gron} and~\cite{Land}, respectively, it suffices to show
that there is a sequence $\{a_n\}$ in $\cS$ such that
$$
\lim_{n\to\infty}\frac{\sigma(a_n)}{a_n\log\log
a_n}=\lim_{n\to\infty}\frac{a_n}{\varphi(a_n)\log\log
a_n}=e^\gamma.
$$
Let $a_1=1$, and for every integer $n\ge 2$, let
$$
b_n=\prod_{p\le n}p,\qquad d_n=\big\lfloor n^{(\log
n)^{-1/2}}\big\rfloor\mand a_n=b_n^{d_n}.
$$
It is easy to see that $d_n\ge 2$ for $n\ge 2$, $d_n=n^{o(1)}$,
and $d_n$ tends to infinity with $n$. Clearly, $a_n\in\cS$ for all
$n\ge 1$, and by the Prime Number Theorem in the form $\sum_{p\le
x}\log p=x(1+o(1))$ as $x\to\infty$ we see that
$$
\log a_n=d_n\log b_n=n^{o(1)}\sum_{p\le n}\log
p=n^{1+o(1)}\qquad(n\to\infty).
$$
The sequence $\{a_n\}$ therefore satisfies the hypotheses of
Propositions~\ref{prop:limsig} and~\ref{prop:limphi}, and the
result follows.
\end{proof}

\begin{corollary}
We have
$$\limsup_{n=a^2 +b^2}\frac{\sigma(n)}{n\log\log n}=\limsup_{n=a^2
+b^2}\frac{n}{\varphi(n)\log\log n}=e^\gamma.$$
\end{corollary}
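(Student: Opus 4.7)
The plan is to specialize Corollary~\ref{cor:LandGron} to the choice $\cP=\{p\in\P:p\equiv 1\pmod 4\}\cup\{2\}$ and $\cQ=\{q\in\P:q\equiv 3\pmod 4\}$ used in Section~\ref{sec:explicit}. First I would note that, by Fermat's theorem on sums of two squares, every prime $q\equiv 3\pmod 4$ dividing a number of the form $n=a^2+b^2$ occurs in $n$ with even multiplicity; in particular $q^2\mid n$, so the set of sums of two squares is contained in this $\cS$. Consequently the upper bound
$$\limsup_{n=a^2+b^2}\frac{\sigma(n)}{n\log\log n}\le e^\gamma\mand\limsup_{n=a^2+b^2}\frac{n}{\varphi(n)\log\log n}\le e^\gamma$$
is immediate from Gronwall's and Landau's theorems, since sums of two squares form a subset of $\N$.

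For the matching lower bound I would exhibit an infinite sequence $\{a_n\}$ of sums of two squares along which both $\sigma(a_n)/(a_n\log\log a_n)$ and $a_n/(\varphi(a_n)\log\log a_n)$ tend to $e^\gamma$. The construction used in the proof of Corollary~\ref{cor:LandGron} took $a_n=b_n^{d_n}$ with $b_n=\prod_{p\le n}p$ and $d_n=\fl{n^{(\log n)^{-1/2}}}$; however $a_n$ is a sum of two squares only when $d_n$ is even, since otherwise primes $q\equiv 3\pmod 4$ appear in $a_n$ to an odd power. The natural fix is to replace $d_n$ by the least even integer $e_n\ge d_n$ (so $e_n\in\{d_n,d_n+1\}$) and set $a_n=b_n^{e_n}$; then $a_n$ is a perfect square and hence a sum of two squares (writing it as $(b_n^{e_n/2})^2+0^2$). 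The hypotheses of Propositions~\ref{prop:limsig} and~\ref{prop:limphi} remain valid: $\kappa(a_n)=\prod_{p\le n}p$, $e_n=n^{o(1)}$ tends to infinity, and $\log a_n=e_n\log b_n=n^{1+o(1)}$ by the prime number theorem. Applying both propositions then yields the desired limits.

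There is essentially no obstacle here; the only point requiring a moment of care is the parity of the exponent, which is handled by the trivial modification above. If one prefers both summands to be strictly positive, one may instead take $a_n=2b_n^{e_n}=(b_n^{e_n/2})^2+(b_n^{e_n/2})^2$, which still satisfies all the required hypotheses.
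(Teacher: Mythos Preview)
Your proof is correct and follows essentially the same approach as the paper: exhibit a sequence of perfect squares satisfying the hypotheses of Propositions~\ref{prop:limsig} and~\ref{prop:limphi}. The paper simply takes the sequence $\{a_n^2\}$ with $a_n$ as in Corollary~\ref{cor:LandGron} (so the exponent is $2d_n$), whereas you round $d_n$ up to the nearest even integer; both modifications achieve the same end.
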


\begin{proof}
Defining $a_n$ for all $n\ge 1$ as in the proof of
Corollary~\ref{cor:LandGron}, it is easy to see that the sequence
$\{a_n^2\}$ satisfies the hypotheses of
Propositions~\ref{prop:limsig} and~\ref{prop:limphi}; it follows
that
$$
\limsup_{n=a^2}\frac{\sigma(n)}{n\log\log
n}=\limsup_{n=a^2}\frac{n}{\varphi(n)\log\log n}=e^\gamma,
$$
and this implies the stated result.
\end{proof}

\end{document}